
\documentclass[12pt]{amsart}%
\usepackage{amsmath}
\usepackage{amsfonts}
\usepackage{amssymb}
\usepackage{amstext}
\usepackage{graphicx}%
\setcounter{MaxMatrixCols}{30}
\providecommand{\U}[1]{\protect \rule{.1in}{.1in}}
\providecommand{\U}[1]{\protect \rule{.1in}{.1in}}
\providecommand{\U}[1]{\protect \rule{.1in}{.1in}}
\marginparwidth 0pt   \marginparsep 0pt
\oddsidemargin -0.1in \evensidemargin 0pt
\topmargin -.3in
\textwidth 6.5in
\textheight 8.5in

\newtheorem{theorem}{Theorem}[section]
\newtheorem{lemma}{Lemma}[section]
\newtheorem{proposition}{Proposition}[section]

\numberwithin{equation}{section}
\newtheorem {conjecture}{Conjecture}

\theoremstyle{remark}
\newtheorem{remark}{Remark}[section]
\numberwithin{equation}{section}
\begin{document}
\title[sharp dimension estimate of CR holomorphic functions]{On the sharp dimension estimate of CR holomorphic functions in Sasakian Manifolds}
\author{$^{\ast}$Shu-Cheng Chang}
\address{Department of Mathematics and Taida Institute for Mathematical Sciences
(TIMS), National Taiwan University, Taipei 10617, Taiwan, R.O.C.}
\email{scchang@math.ntu.edu.tw }
\author{$^{\dag}$Yingbo Han}
\address{{School of Mathematics and Statistics, Xinyang Normal University}\\
Xinyang,464000, Henan, P.R. China}
\email{{yingbohan@163.com}}
\author{$^{\ast \ast}$Chien Lin}
\address{Department of Mathematics, National Tsing Hua University, Hsinchu 30013,
Taiwan, R.O.C}
\email{r97221009@ntu.edu.tw}
\thanks{$^{\ast}$Shu-Cheng Chang and $^{\ast \ast}$Chien Lin are partially supported in
part by the MOST of Taiwan.}
\thanks{$^{\dag}$Yingbo Han is partially supported by an NSFC grant No. 11201400 and
Nanhu Scholars Program for Young Scholars of {Xinyang Normal University}.}
\subjclass{Primary 32V05, 32V20; Secondary 53C56.}
\keywords{CR heat kernel, CR Yau's uniformization conjecture, CR holomorphic function,
Sharp dimension estimate.}

\begin{abstract}
This is the very first paper to focus on the CR analogue of Yau's
uniformization conjecture in a complete noncompact pseudohermitian
$(2n+1)$-manifold of vanishing torsion (i.e. Sasakian manifold) which is an
odd dimensional counterpart of K\"{a}hler geometry. In this paper, we mainly
deal with the problem of the sharp dimension estimate of CR holomorphic
functions in a complete noncompact pseudohermitian manifold of vanishing
torsion with nonnegative pseudohermitian bisectional curvature.

\end{abstract}
\maketitle

\section{Introduction}

S.-Y. Cheng and S.-T. Yau( \cite{y1}, \cite{cy}) derived the well-known
gradient estimate for positive harmonic functions and obtained the classical
Liouville theorem, which states that any bounded harmonic function is constant
in complete noncompact $m$-dimensional Riemannian manifolds with nonnegative
Ricci curvature. Yau conjectured that the dimension $h^{d}(M)$ of the space
$\mathcal{H}^{d}(M)$ consisting of harmonic functions of polynomial growth of
degree at most $d,$ is finite for each positive integer $d$ and satisfies the
estimate:
\[
h^{d}(M)\leq h^{d}(\mathbf{R}^{m}).
\]
Colding and Minicozzi (\cite{cm}) affirmatively answered the first question
and proved that
\[
h^{d}(M)\leq C_{0}d^{m-1}%
\]
for manifolds of nonnegative Ricci curvature. Later, Li (\cite{li2}) produced
a shorter proof requiring only the manifold to satisfy the volume doubling
property and the mean value inequality. For the latter question, the sharp
upper bound estimate is still missing except for the special cases $m=2$ or
$d=1$ obtained by Li-Tam\ (\cite{lt}, \cite{lt1}) and Kasue (\cite{k}), and
the rigidity part is only known for the special case $d=1$ obtained by Li
(\cite{li1}) and Cheeger-Colding-Minicozzi (\cite{ccm}).

In \cite{ckt}, by modifying the arguments of \cite{y1}, \cite{cy}, and
\cite{ckl}, Chang, Kuo and Tie derived a sub-gradient estimate for positive
pseudoharmonic functions in a complete noncompact pseudohermitian manifold
$(M,J,\theta)$. This sub-gradient estimate can serve as the CR version of
Yau's gradient estimate. As an application of the sub-gradient estimate, the
CR analogue of Liouville-type theorem holds for positive pseudoharmonic
functions. In the recent paper (\cite{ccht}), we study the CR analogue of
Yau's conjecture on the space $\mathcal{H}^{d}(M)$ consisting of all
pseudoharmonic functions of polynomial growth of degree at most $d$ in a
complete noncompact pseudohermitian manifold. We showed that the first part of
CR Yau's conjecture holds for pseudoharmonic functions of polynomial growth.

In K\"{a}hler geometry, Yau (\cite{scy}) proposed a variety of
uniformization-type problems on complete K\"{a}hler manifolds with nonnegative
holomorphic bisectional curvature. The first conjecture is that if $M$ is a
complete noncompact $n$-dimensional K\"{a}hler manifold with nonnegative
holomorphic bisectional curvature, then
\[
\dim_{%
\mathbb{C}
}\left(  \mathcal{O}_{d}\left(  M^{n}\right)  \right)  \leq \dim_{%
\mathbb{C}
}\left(  \mathcal{O}_{d}\left(
\mathbb{C}
^{n}\right)  \right)  .
\]
The equality holds if and only if $M$ is isometrically biholomorphic to $%
\mathbb{C}
^{n}$. Here $\mathcal{O}_{d}\left(  M^{n}\right)  $ denotes the family of all
holomorphic functions on a complete $n$-dimensional K\"{a}hler manifold $M$ of
polynomial growth of degree at most $d$. In \cite{n}, Ni established the
validity of this conjecture by deriving the monotonicity formula for the heat
equation under the extra assumption that $M$ has maximal volume growth
\[
\lim_{r\rightarrow+\infty}\frac{Vol\left(  B_{p}\left(  r\right)  \right)
}{r^{2n}}\geq c
\]
for a fixed point $p$ and a positive constant $c$. Later, in \cite{cfyz},
Chen, Fu, Yin, and Zhu improved Ni's result without the assumption of maximal
volume growth. One should refer to \cite{liu1} for more general results recently.

The second conjecture is that if $M$ is a complete noncompact $n$-dimensional
K\"{a}hler manifold with nonnegative holomorphic bisectional curvature, then
the ring $O_{P}\left(  M\right)  $ of all holomorphic functions of polynomial
growth is finitely generated. This one was solved completely by G. Liu
(\cite{liu2}) quite recently. He mainly deployed four techniques to attack
this conjecture via Cheeger-Colding-Tian's theory (\cite{chco1}, \cite{chco2},
\cite{cct}), methods of heat flow developed by Ni and Tam (\cite{n},
\cite{nt1}, \cite{nt4}), Hormander $L^{2}$-estimate of $\overline{\partial}$
(\cite{de}) and three circle theorem (\cite{liu1}).

The third uniformization conjecture is that if $M$ is a complete noncompact
$n$-dimensional K\"{a}hler manifold with positive holomorphic bisectional
curvature, then $M$ is biholomorphic to the standard $n$-dimensional complex
space $%
\mathbb{C}
^{n}$. The first giant progress pertaining to the third conjecture could be
attributed to Mok, Siu and Yau. In their papers (\cite{msy} and \cite{m1}),
they showed that, under the assumptions of the maximal volume growth and the
scalar curvature $R\left(  x\right)  $ decays as
\[
0\leq r\left(  x\right)  \leq \frac{C}{\left(  1+d\left(  x,x_{0}\right)
\right)  ^{2+\epsilon}}%
\]
for some positive constant $C$ and any arbitrarily small positive number
$\epsilon$, a complete noncompact $n$-dimensional K\"{a}hler manifold $M$ with
nonnegative holomorphic bisectional curvature is isometrically biholomorphic
to $%
\mathbb{C}
^{n}$. A Riemannian version was proved in \cite{gw2} shortly afterwards. Since
then there are several further works aiming to prove the optimal result and
reader is referred to \cite{m2}, \cite{ctz}, \cite{cz}, \cite{n2}, \cite{nt1},
\cite{nt2} and \cite{nst}. For example, A. Chau and L. F. Tam (\cite{ct})
proved that a complete noncompact K\"{a}hler manifold with bounded nonnegative
bisectional curvature and maximal volume growth is biholomorphic to $%
\mathbb{C}
^{n}$. Recently, G. Liu (\cite{liu3}) confirmed Yau's uniformization
conjecture when $M$ has maximal volume growth.

This is the very first paper to focus on the CR analogue of Yau's
uniformization conjecture in a complete noncompact pseudohermitian
$(2n+1)$-manifold of vanishing torsion (i.e. Sasakian manifold) which is an
odd dimensional counterpart of K\"{a}hler geometry. That is

\begin{conjecture}
Let $M$ be a complete noncompact pseudohermitian $(2n+1)$-manifold of
vanishing torsion with positive pseudohermitian bisectional curvature. Then
$M$ is CR biholomorphic to the standard Heisenberg group $\mathbf{H}_{n}$ $=$
$%
\mathbb{C}
^{n}\times%
\mathbb{R}
$.
\end{conjecture}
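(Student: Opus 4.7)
The plan is to adapt, stage by stage, the K\"ahler uniformization program of Mok--Siu--Yau, Ni, Chen--Fu--Yin--Zhu, and G. Liu to the Sasakian category, with the sub-Laplacian $\Delta_{b}$ and $\overline{\partial}_{b}$ replacing the complex Laplacian and $\overline{\partial}$. The natural starting point is the sharp dimension estimate for the space of CR holomorphic functions of polynomial growth that is the main theorem of the present paper. Combined with the CR heat kernel and a Li--Yau type differential Harnack inequality under nonnegative pseudohermitian bisectional curvature, one should then produce a Perelman--Ni type monotone quantity along the CR heat equation whose behavior at infinity controls the decay of the Webster scalar curvature and the asymptotic volume ratio.

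The second step would be a Sasakian three-circle theorem: for any CR holomorphic $u$ on $M$, the function $\log\max_{B_{p}(r)}|u|$ should be convex in $\log r$. Vanishing torsion is crucial here, since it forces the horizontal Hessian of $|u|^{2}$ to behave as in the K\"ahler case and permits a Poisson--Jensen identity to be propagated along pseudohermitian geodesics. Using the three-circle theorem together with the sharp dimension estimate, one should extract $n$ CR holomorphic functions $z_{1},\dots,z_{n}$ of linear growth that separate the horizontal directions at a generic point; a CR analogue of H\"ormander's weighted $L^{2}$ estimate for $\overline{\partial}_{b}$ on Sasakian manifolds with positive pseudohermitian bisectional curvature should then be invoked to upgrade asymptotic data into honest global CR holomorphic functions.

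The final step is to assemble a global CR biholomorphism. The Reeb vector field of a Sasakian manifold is a unit Killing field, so its flow defines a transverse parameter $t$, and I would show that the map $\Phi=(z_{1},\dots,z_{n},t)\colon M\to\mathbf{H}_{n}$ realizes the biholomorphism. Positivity of the pseudohermitian bisectional curvature, combined with a Bochner identity for $\overline{\partial}_{b}z_{i}$, should rule out critical points of $\Phi$; a CR Bishop--Gromov comparison should force $\Phi$ to be proper; and matching the Levi form and the contact form at a single base point would identify the induced contact structure with the standard one on $\mathbf{H}_{n}$.

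The principal obstacle, in my view, is the subelliptic nature of $\overline{\partial}_{b}$: closed range in middle degree is delicate on a general Sasakian manifold, and the H\"ormander weighted $L^{2}$ theory has not been developed with the robustness of its K\"ahler ancestor. Lower bounds on the pseudohermitian bisectional curvature also fail to translate cleanly into volume doubling, because the Reeb direction must be treated separately from the horizontal ones. I therefore expect an honest proof to be staged, first under an additional maximal volume growth hypothesis paralleling G. Liu's K\"ahler result, and then extended by a tangent-cone analysis adapted to the Sasakian foliation.
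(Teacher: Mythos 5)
The statement you were asked to prove is labelled a \emph{Conjecture} in the paper and is not proved there: the paper only establishes the sharp dimension estimate $\dim_{\mathbb{C}}\mathcal{O}_{d}^{CR}(M)\leq\dim_{\mathbb{C}}\mathcal{O}_{d}^{CR}(\mathbf{H}_{n})$ (Theorem \ref{TA}), which is the CR analogue of the \emph{first} of Yau's uniformization-type problems, and explicitly leaves the biholomorphism statement open. What you have written is a research program, not a proof, and you essentially concede this yourself in the last paragraph. The two most serious gaps are concrete. First, every stage that produces CR holomorphic functions (the three-circle theorem, the extraction of $n$ linear-growth functions $z_{1},\dots,z_{n}$, the ``CR H\"ormander $L^{2}$ estimate for $\overline{\partial}_{b}$'') is invoked without any argument; in particular the weighted $L^{2}$ theory for $\overline{\partial}_{b}$ on a complete noncompact Sasakian manifold does not currently exist, and without it you have no global functions at all, let alone ones separating horizontal directions. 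Even in the K\"ahler model the passage from the dimension estimate to the biholomorphism required Cheeger--Colding--Tian structure theory, heat-flow deformation of plurisubharmonic functions, and maximal volume growth (Liu); none of this has a Sasakian counterpart in your sketch.

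Second, your final assembly step is not merely unproved but false as stated: the Reeb flow of a Sasakian manifold need not define a global transverse parameter $t\colon M\to\mathbb{R}$. The Reeb orbits may be closed (quasi-regular or irregular Sasakian structures), in which case no function with $Tt=1$ exists globally; proving that the Reeb field generates a free, proper $\mathbb{R}$-action whose leaf space is a smooth K\"ahler manifold biholomorphic to $\mathbb{C}^{n}$ is essentially the content of the conjecture itself, not an input to it. Likewise, ruling out critical points of $\Phi=(z_{1},\dots,z_{n},t)$ via a Bochner identity and proving properness and injectivity are each major open steps (injectivity is the hard point even in the K\"ahler case). So the proposal identifies plausible ingredients, several of which the authors themselves are developing elsewhere (the Sasakian three-circle theorem, the CR Poincar\'e--Lelong equation), but it does not constitute a proof of the conjecture, and no proof is available in the paper to compare it against.
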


In this paper, it is very natural to concerned the CR analogue of the first
conjecture in a complete noncompact pseudohermitian $(2n+1)$-manifold of
vanishing torsion with nonnegative pseudohermitian bisectional curvature. A
smooth complex-valued function on a pseudohermitian $\left(  2n+1\right)
$-manifold $\left(  M,J,\theta \right)  $ is called CR-holomorphic if%
\[
\overline{\partial}_{b}f=0.
\]
We refer to the next section for the detailed notations. For any fixed point
$x\in M$, a CR-holomorphic function $f$ is called to be of polynomial growth
if there are a nonnegative number $d$ and a positive constant $C=C\left(
x,d,f\right)  $, depending on $x$, $d$ and $f$, such that%
\[
\left \vert f\left(  y\right)  \right \vert \leq C\left(  1+d_{cc}\left(
x,y\right)  \right)  ^{d}%
\]
for all $y\in M$, where $d_{cc}\left(  x,y\right)  $ denotes the
Carnot-Caratheodory distance between $x$ and $y.$ In the following sections,
we sometimes would use the notation $r\left(  x,y\right)  $ for the
Carnot-Caratheodory distance. Furthermore, we could define the degree of a
CR-holomorphic function $f$ of polynomial growth by%
\[
\deg \left(  f\right)  =\inf \left \{  d\geq0\left \vert
\begin{array}
[c]{c}%
\left \vert f\left(  y\right)  \right \vert \leq C\left(  1+d_{cc}\left(
x,y\right)  \right)  ^{d}\text{ \ }\forall \text{ }y\in M,\\
for\text{ }some\text{ }d\geq0\text{ }and\text{ }C=C\left(  x,d,f\right)
\end{array}
\right.  \right \}
\]
as well as the aforementioned holomorphic case. In fact, the definition above
is independent of the choice of the point $x\in M$. Finally we denote
$\mathcal{O}_{d}^{CR}(M)$ the family of all CR-holomorphic functions $f$ of
polynomial growth of degree at most $d$ with $Tf(x)=f_{0}(x)=0:$
\[
\mathcal{O}_{d}^{CR}(M)=\{f(x)\ |\overline{\partial}_{b}f(x)=0,\ f_{0}%
(x)=0\text{ and }|f(x)|\leq C\left(  1+d_{cc}\left(  x,y\right)  \right)
^{d}\text{ for some constant }C\  \}.
\]

Now we explain the extra condition $Tf=0$ in the definition of $\mathcal{O}%
_{d}^{CR}\left(  M\right)  $ in a Sasakian manifold. Follow the notion as in
\cite{fow} : \ Let $\{U_{\alpha}\}_{\alpha \in A}$ be an open covering of the
Sasakian manifold $(M^{2n+1},g)$ and $\pi_{\alpha}:$ $U_{\alpha}\rightarrow
V_{\alpha}\subset%
\mathbb{C}
^{n}$ submersion such that $\pi_{\alpha}\circ \pi_{\beta}^{-1}:\pi_{\beta
}(U_{\alpha}\cap U_{\beta})\rightarrow \pi_{\alpha}(U_{\alpha}\cap U_{\beta})$
is biholomorphic. On each $V_{\alpha},$ there is a canonical isomorphism
\[
d\pi_{\alpha}:D_{p}\rightarrow T_{\pi_{\alpha}(p)}V_{\alpha}%
\]
for any $p\in U_{\alpha},$ where $D=\ker \theta \subset TM.$ Since $T$ generates
isometries, the restriction of the Sasakian metric $g$ to $D$ gives a
well-defined Hermitian metric $g_{\alpha}^{T}$ on $V_{\alpha}.$ This Hermitian
metric in fact is K\"{a}hler. More precisely, let $z^{1},z^{2},\cdot \cdot
\cdot,z^{n}$ be the local holomorphic coordinates on $V_{\alpha}$. We pull
back these to $U_{\alpha}$ and still write the same. Let $x$ be the coordinate
along the leaves with $T=\frac{\partial}{\partial x}.$ Then we have the local
coordinate $\{x,z^{1},z^{2},\cdot \cdot \cdot,z^{n}\}$ on $U_{\alpha}\ $and
$(D\otimes%
\mathbb{C}
)^{1,0}$ is spanned by the form
\[
Z_{\alpha}=(\frac{\partial}{\partial z^{\alpha}}-\theta(\frac{\partial
}{\partial z^{\alpha}})T),\  \  \  \alpha=1,2,\cdot \cdot \cdot,n.
\]
Since $i(T)d\theta=0,$
\[
d\theta(Z_{\alpha},\overline{Z_{\beta}})=d\theta(\frac{\partial}{\partial
z^{\alpha}},\frac{\partial}{\partial \overline{z}^{\beta}}).
\]
Then the K\"{a}hler $2$-form $\omega_{\alpha}^{T}$ of the Hermitian metric
$g_{\alpha}^{T}$ on $V_{\alpha},$ which is the same as the restriction of the
Levi form $\frac{1}{2}d\theta$ to $\widetilde{D_{\alpha}^{n}}$, the slice
$\{x=$ \textrm{constant}$\}$ in $U_{\alpha},$ is closed. The collection of
K\"{a}hler metrics $\{g_{\alpha}^{T}\}$ on $\{V_{\alpha}\}$ is so-called a
transverse K\"{a}hler metric. We often refer to $\frac{1}{2}d\theta$ as the
K\"{a}hler form of the transverse K\"{a}hler metric $g^{T}$in the leaf space
$\widetilde{D^{n}}.$ As an example, $\left \{  Z_{\alpha}=\frac{\partial
}{\partial z^{\alpha}}+i\overline{z}^{\alpha}\frac{\partial}{\partial
t}\right \}  _{\alpha=1}^{n}$ is exactly a local frame on in the $(2n+1)$%
-dimensional Heisenberg group $\mathbf{H}_{n}$ $=$ $%
\mathbb{C}
^{n}\times%
\mathbb{R}
$. Here
\[
\theta=dt+i%
{\displaystyle \sum \limits_{\alpha \in I_{n}}}
\left(  z^{\alpha}d\overline{z}^{\alpha}-\overline{z}^{\alpha}dz^{\alpha
}\right)
\]
is a pseudohermitian contact structure on $\mathbf{H}_{n}$ and $T=\frac
{\partial}{\partial t}$. In this case, $\widetilde{D^{n}}=%
\mathbb{C}
^{n}$ and then
\begin{equation}
\dim_{%
\mathbb{C}
}\mathcal{O}_{d}(%
\mathbb{C}
^{n})=\dim_{%
\mathbb{C}
}\left(  \mathcal{O}_{d}^{CR}\left(  \mathbf{H}_{n}\right)  \right)  .
\label{r1}%
\end{equation}

Now, for a nontrivial function $f\in$ $\mathcal{O}_{d}^{CR}\left(  M\right)  $
with $f\left(  x\right)  =0$, we would define the vanishing order of $f$ at
$x\in M$ by%
\[
ord_{x}\left(  f\right)  =\max \left \{  m\in%
\mathbb{N}
\text{ }|\ D^{\alpha}f=0\text{,\  \  \  \ }\forall \text{ }\left \vert
\alpha \right \vert <m\  \right \}  ,
\]
where $D^{\alpha}=%
{\displaystyle \prod \limits_{j\in I_{n}}}
Z_{j}^{\alpha_{j}}$ with $\alpha=\left(  \alpha_{1},\alpha_{2,}...,\alpha
_{n}\right)  $.

Now we state our main theorem in this paper.

\begin{theorem}
\label{TA} If $(M,J,\theta)$ is a complete noncompact pseudohermitian
$(2n+1)$-manifold of vanishing torsion with nonnegative pseudohermitian
bisectional curvature, then%
\begin{equation}
\dim_{%
\mathbb{C}
}\left(  \mathcal{O}_{d}^{CR}\left(  M\right)  \right)  \leq \dim_{%
\mathbb{C}
}\left(  \mathcal{O}_{d}^{CR}\left(  \mathbf{H}_{n}\right)  \right)
\label{339}%
\end{equation}
for each $d\in%
\mathbb{N}
$ and $\mathbf{H}_{n}$ = $%
\mathbb{C}
^{n}\times%
\mathbb{R}
$ is the $(2n+1)$-dimensional Heisenberg group.
\end{theorem}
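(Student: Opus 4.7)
The plan is to transplant Ni's heat-kernel monotonicity approach (which resolves the K\"ahler analogue in \cite{n}) to the CR/Sasakian setting. The key structural observation is that the hypotheses $Tf=0$ and vanishing torsion force $f$ to be a ``basic'' function that descends, in each transverse chart $\pi_{\alpha}\colon U_{\alpha}\to V_{\alpha}$, to an honest holomorphic function on the K\"ahler leaf space $\widetilde{D^{n}}$. This lets us reformulate the problem in a way that is amenable to the methods used for the K\"ahler case.

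The first step is to reduce Theorem \ref{TA} to the following pointwise vanishing-order bound: for every nontrivial $f\in \mathcal{O}_{d}^{CR}(M)$ and every $x_{0}\in M$, $ord_{x_{0}}(f)\leq d$. Once this is granted, the theorem follows by pure linear algebra. Given a finite-dimensional subspace $V\subset \mathcal{O}_{d}^{CR}(M)$, set $V_{m}=\{f\in V : ord_{x_{0}}(f)\geq m\}$. Since $Tf=0$ and $\overline{\partial}_{b}f=0$, $f$ depends only on the transverse holomorphic coordinates $z^{1},\ldots,z^{n}$ near $x_{0}$, so the leading-term map sends $V_{m}/V_{m+1}$ injectively into the space of homogeneous holomorphic polynomials of degree $m$ in $n$ complex variables, giving $\dim V_{m}/V_{m+1}\leq \binom{n+m-1}{n-1}$. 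The vanishing-order bound forces $V_{d+1}=0$, so
\begin{equation*}
\dim_{\mathbb{C}} V \;\leq\; \sum_{m=0}^{d}\binom{n+m-1}{n-1} \;=\; \binom{n+d}{n} \;=\; \dim_{\mathbb{C}}\mathcal{O}_{d}(\mathbb{C}^{n}),
\end{equation*}
and the identification (\ref{r1}) closes the argument.

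For the vanishing-order bound itself I would argue via a CR heat-equation monotonicity formula. Because the torsion vanishes and $Tf=0$, the CR Bochner identity collapses to $\Delta_{b}|f|^{2} = 2|\nabla_{b}f|^{2}\geq 0$, so $|f|^{2}$ is sub-$b$-harmonic. Let $p(x,y,t)$ denote the heat kernel of the sub-Laplacian $\Delta_{b}$ and define
\begin{equation*}
F(t) \;=\; \int_{M} |f(y)|^{2} \, p(x_{0},y,t)\, d\mu(y).
\end{equation*}
Using the nonnegative pseudohermitian bisectional curvature together with a Li--Yau type differential Harnack estimate for positive solutions of the sub-heat equation, I would show that $\log F$ is a convex function of $\log t$, so that $m(t):=\frac{d}{d\log t}\log F(t)$ is non-decreasing. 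The short-time CR heat-kernel asymptotics, combined with the Taylor expansion of $f$ in the transverse coordinates, give $\lim_{t\to 0^{+}}m(t)=ord_{x_{0}}(f)$, while the polynomial-growth hypothesis paired with a volume-doubling/heat-kernel comparison with $\mathbf{H}_{n}$ yields $\limsup_{t\to\infty}m(t)\leq d$; monotonicity delivers $ord_{x_{0}}(f)\leq d$.

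The main obstacle is the monotonicity itself. Unlike the Riemannian/K\"ahler case, the sub-Laplacian is only subelliptic, the CR heat kernel is not Gaussian in the usual sense, and no direct Bishop-type volume comparison is available on a Sasakian manifold. Making the small-$t$ limit rigorous therefore requires precise off-diagonal asymptotics of $p(x_0,y,t)$ in the Carnot-Caratheodory distance, and the Li--Yau type estimate must be extracted from a transverse CR Bochner computation in which one carefully tracks the contact-torsion contributions. It is exactly the interplay of vanishing torsion (which kills the troublesome off-diagonal terms) and nonnegative pseudohermitian bisectional curvature that lets the K\"ahler argument descend slice by slice onto the leaf space $\widetilde{D^{n}}$ and close the estimate.
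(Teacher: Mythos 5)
Your reduction of the theorem to the pointwise bound $ord_{x_{0}}(f)\leq d$, and the Siegel-type counting that converts this bound into $\dim_{\mathbb{C}}\mathcal{O}_{d}^{CR}(M)\leq\binom{n+d}{n}=\dim_{\mathbb{C}}\mathcal{O}_{d}(\mathbb{C}^{n})$, is exactly the paper's endgame and is fine. The gap is in the middle: the monotonicity you build everything on is asserted, not proved, and it is not the monotonicity the paper uses. You propose that $F(t)=\int_{M}|f|^{2}\,p(x_{0},y,t)\,d\mu(y)$ is log-convex in $\log t$ and that this would follow from ``a Li--Yau type differential Harnack estimate.'' Log-convexity of $F$ in $\log t$ is a frequency/parabolic three-circle type statement; in $\mathbb{C}^{n}$ it holds because the homogeneous pieces of $f$ are orthogonal against the Gaussian, so $F(t)=\sum_{k}c_{k}t^{k}$ with $c_{k}\geq0$, but on a general Sasakian (or even K\"ahler) manifold no such expansion exists, and a Li--Yau inequality (a lower bound on $\partial_{t}\log u-|\nabla_{b}\log u|^{2}$ for a positive solution) does not control the quantity $tF''F+F'F-t(F')^{2}$ whose nonnegativity you need. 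You would in effect need a Li--Yau--Hamilton/matrix-Harnack type inequality adapted to this $L^{2}$ functional, and nothing in the proposal supplies it. Likewise, the identification $\lim_{t\to0^{+}}m(t)=ord_{x_{0}}(f)$ requires precise small-time off-diagonal asymptotics of the subelliptic kernel together with the slice-by-slice transverse computation; you flag this as ``the main obstacle'' but do not carry it out, and in the subelliptic setting (CC balls of volume $\sim t^{n+1}$, Reeb direction scaling like $r^{2}$) it is not a routine adaptation.

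By contrast, the paper follows Ni and Chen--Fu--Yin--Zhu with a different monotone quantity: it solves the CR heat equation with initial datum $v(\cdot,0)=2\log|f|$ (this is why Section 3 exists --- existence for initial data of slow growth via heat-kernel bounds, CR volume doubling, and the $L^{p}$ submean value inequality), proves the Li--Yau--Hamilton-type monotonicity $\partial_{t}\bigl(t\,\partial_{t}v\bigr)\geq0$ using nonnegative pseudohermitian bisectional curvature (via the moment-type estimate of \cite{cf}), identifies $\lim_{t\to0^{+}}t\,\partial_{t}v(x,t)=ord_{x}f$ through the transverse K\"ahler structure and the adapted-metric heat kernel expansion (Lemma \ref{51S}), and bounds $\limsup_{t\to\infty}v(x,t)/\log t\leq d$ from polynomial growth (Lemma \ref{l4.2}), whence $ord_{x}f\leq d$. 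So while your architecture parallels the paper's, the core analytic input --- the monotonicity of your $m(t)$ and its small-time limit --- is precisely what is missing, and it cannot be waved through by citing a Li--Yau estimate; to make your route rigorous you would either have to prove a new parabolic three-circle theorem for $\Delta_{b}$ under nonnegative pseudohermitian bisectional curvature, or fall back on the paper's $2\log|f|$ flow, for which the required estimates are actually established.
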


\begin{remark}
1. In order to prove Theorem \ref{TA}, it follows from (\ref{r1}) that it
suffices to show that
\begin{equation}
\dim_{%
\mathbb{C}
}\left(  \mathcal{O}_{d}^{CR}\left(  M\right)  \right)  \leq \dim_{%
\mathbb{C}
}\mathcal{O}_{d}\left(
\mathbb{C}
^{n}\right)  \label{339A}%
\end{equation}
in a complete noncompact pseudohermitian $(2n+1)$-manifold of vanishing
torsion with nonnegative pseudohermitian bisectional curvature.

2. \ It is interesting to know whether $M$ is CR equivalent to the Heisenberg
group $\mathbf{H}_{n}$ if the equality holds
\[
\dim_{%
\mathbb{C}
}\left(  \mathcal{O}_{d}^{CR}\left(  M\right)  \right)  =\dim_{%
\mathbb{C}
}\left(  \mathcal{O}_{d}^{CR}\left(  \mathbf{H}_{n}\right)  \right)  .
\]
\bigskip
\end{remark}

The method here is inspired mainly from \cite{n} and \cite{cfyz} which is
organized as follows. In Section $2$, we introduce some basic notions about
pseudohermitian manifolds and the necessary results for this paper. In Section
$3$, we show the existence of solutions to the CR heat equation with the
initial condition under some appropriate assumptions. In Section $4$, we prove
the CR analogue of the rough dimension estimate. In Section $5$, we show the
CR sharp dimension estimate. In Appendix, we would give the complete proof of
the $L^{p}$-submean value inequality which is a key step in showing the
existence of solutions to the CR heat equation in Section $3$.

\section{Preliminaries}

We introduce some basic materials about a pseudohermitian manifold (see
\cite{l} and \cite{dt} for more details). Let $(M,\xi)=\left(  M,J,\theta
\right)  $ be a $(2n+1)$-dimensional, orientable, contact manifold with the
contact structure $\xi$. A CR structure compatible with $\xi$ is an
endomorphism $J:\xi \rightarrow \xi$ such that $J^{2}=-1$. We also assume that
$J$ satisfies the integrability condition: If $X$ and $Y$ are in $\xi$, then
so are $[JX,Y]+[X,JY]$ and $J([JX,Y]+[X,JY])=[JX,JY]-[X,Y]$.

Let $\left \{  T,Z_{\alpha},Z_{\bar{\alpha}}\right \}  _{\alpha \in I_{n}}$ be a
frame of $TM\otimes \mathbb{C}$, where $Z_{\alpha}$ is any local frame of
$T_{1,0}M,\ Z_{\bar{\alpha}}=\overline{Z_{\alpha}}\in T_{0,1}M$ ,and $T$ is
the characteristic vector field and $I_{n}=\left \{  1,2,...,n\right \}  $.Then
$\left \{  \theta,\theta^{\alpha},\theta^{\bar{\alpha}}\right \}  $, the coframe
dual to $\left \{  T,Z_{\alpha},Z_{\bar{\alpha}}\right \}  $, satisfies
\[
d\theta=ih_{\alpha \overline{\beta}}\theta^{\alpha}\wedge \theta^{\overline
{\beta}}%
\]
for some positive definite hermitian matrix of functions $(h_{\alpha \bar
{\beta}})$. If we have this contact structure, we call such $M$ a
pseudohermitian $(2n+1)$-manifold or strictly pseudoconvex CR $(2n+1)$%
-manifold as well.

The Levi form $\left \langle \ ,\  \right \rangle _{L_{\theta}}$ is the Hermitian
form on $T_{1,0}M$ defined by%
\[
\left \langle Z,W\right \rangle _{L_{\theta}}=-i\left \langle d\theta
,Z\wedge \overline{W}\right \rangle .
\]

We can extend $\left \langle \ ,\  \right \rangle _{L_{\theta}}$ to $T_{0,1}M$ by
defining $\left \langle \overline{Z},\overline{W}\right \rangle _{L_{\theta}%
}=\overline{\left \langle Z,W\right \rangle }_{L_{\theta}}$ for all $Z,W\in
T_{1,0}M$. The Levi form induces naturally a Hermitian form on the dual bundle
of $T_{1,0}M$, denoted by $\left \langle \ ,\  \right \rangle _{L_{\theta}^{\ast
}}$, and hence on all the induced tensor bundles. Integrating the Hermitian
form (when acting on sections) over $M$ with respect to the volume form
$d\mu=\theta \wedge(d\theta)^{n}$, we get an inner product on the space of
sections of each tensor bundle.

The pseudohermitian connection of $(J,\theta)$ is the connection $\nabla$ on
$TM\otimes \mathbb{C}$ (and extended to tensors) given in terms of a local
frame $Z_{\alpha}\in T_{1,0}M$ by%

\[
\nabla Z_{\alpha}=\omega_{\alpha}{}^{\beta}\otimes Z_{\beta},\quad \nabla
Z_{\bar{\alpha}}=\omega_{\bar{\alpha}}{}^{\bar{\beta}}\otimes Z_{\bar{\beta}%
},\quad \nabla T=0,
\]
where $\omega_{\alpha}{}^{\beta}$ are the $1$-forms uniquely determined by the
following equations :%

\[
\left \{
\begin{array}
[c]{l}%
d\theta^{\alpha}+\omega_{\beta}^{\alpha}\wedge \theta^{\beta}=\theta \wedge
\tau^{\alpha}\\
\tau_{\alpha}\wedge \theta^{\alpha}=0\\
\omega_{\beta}^{\alpha}+\omega_{\overline{\alpha}}^{\overline{\beta}}=0
\end{array}
\right.  .
\]

We can write (by Cartan lemma) $\tau_{\alpha}=A_{\alpha \gamma}\theta^{\gamma}$
with $A_{\alpha \gamma}=A_{\gamma \alpha}$. The curvature of Tanaka-Webster
connection, expressed in terms of the coframe $\{ \theta=\theta^{0}%
,\theta^{\alpha},\theta^{\bar{\alpha}}\}$, is%
\[
\left \{
\begin{array}
[c]{l}%
\Pi_{\beta}^{\alpha}=\overline{\Pi_{\overline{\beta}}^{\overline{\alpha}}%
}=d\omega_{\beta}^{\alpha}+\omega_{\gamma}^{\alpha}\wedge \omega_{\beta
}^{\gamma}\\
\Pi_{0}^{\alpha}=\Pi_{\alpha}^{0}=\Pi_{0}^{\overline{\alpha}}=\Pi
_{\overline{\alpha}}^{0}=\Pi_{0}^{0}=0
\end{array}
\right.  .
\]

Webster showed that $\Pi_{\beta}{}^{\alpha}$ can be written%
\[
\Omega_{\beta}^{\alpha}=\Pi_{\beta}^{\alpha}+i\tau^{\alpha}\wedge \theta
_{\beta}-i\theta^{\alpha}\wedge \tau_{\beta}=R_{\beta \text{ \ }\gamma
\overline{\delta}}^{\text{ \ }\alpha}\theta^{\gamma}\wedge \theta
^{\overline{\delta}}+W_{\beta \gamma}^{\alpha}\theta^{\gamma}\wedge
\theta-W_{\beta \overline{\gamma}}^{\alpha}\theta^{\overline{\gamma}}%
\wedge \theta
\]
where the coefficients satisfy
\[
R_{\beta \bar{\alpha}\rho \bar{\sigma}}=\overline{R_{\alpha \bar{\beta}\sigma
\bar{\rho}}}=R_{\bar{\alpha}\beta \bar{\sigma}\rho}=R_{\rho \bar{\alpha}%
\beta \bar{\sigma}},\  \  \ W_{\beta \bar{\alpha}\gamma}=W_{\gamma \bar{\alpha
}\beta}.
\]

Here $R_{\beta \text{ \ }\gamma \overline{\delta}}^{\text{ \ }\alpha}$ is the
pseudohermitian curvature tensor, $R_{\alpha \bar{\beta}}=R_{\gamma}{}^{\gamma
}{}_{\alpha \bar{\beta}}$ is the pseudohermitian Ricci curvature tensor and
$A_{\alpha \beta}$\ is the pseudohermitian torsion. $R=h^{\alpha \overline
{\beta}}R_{\alpha \bar{\beta}}$ denotes the pseudohermitian scalar curvature.
Besides, we define the pseudohermitian bisectional curvature%
\[
R_{\alpha \bar{\alpha}\beta \overline{\beta}}(X,Y)=R_{\alpha \bar{\alpha}%
\beta \overline{\beta}}X_{\alpha}X_{\overline{\alpha}}Y_{\beta}Y_{\bar{\beta}%
},
\]
the bitorsion tensor
\[
T_{\alpha \overline{\beta}}(X,Y)=\frac{1}{i}(A_{\alpha \gamma}X^{\gamma
}Y_{\overline{\beta}}-A_{\overline{\beta}\overline{\gamma}}X^{\overline
{\gamma}}Y_{\alpha}),
\]
the torsion tensor%
\[
Tor\left(  X,Y\right)  =tr\left(  T_{\alpha \overline{\beta}}\right)  =\frac
{1}{i}(A_{\alpha \beta}X^{\beta}Y^{\alpha}-A_{\overline{\alpha}\overline{\beta
}}X^{\overline{\beta}}Y^{\overline{\alpha}}),
\]
and the tensor
\[
\left(  \operatorname{div}A\right)  ^{2}\left(  X,Y\right)  =A_{\alpha
\gamma,\overline{\gamma}}A_{\overline{\beta}\overline{\delta},\delta}%
X^{\alpha}Y^{\overline{\beta}}%
\]
where $X=X^{\alpha}Z_{\alpha},Y=Y^{\alpha}Z_{\alpha}$ in $T_{1,0}M$.

We will denote the components of the covariant derivatives with indices
preceded by comma. The indices $\{0,\alpha,\bar{\alpha}\}$ indicate
derivatives with respect to $\{T,Z_{\alpha},Z_{\bar{\alpha}}\}$. For
derivatives of a real-valued function, we will often omit the comma, for
instance, $u_{\alpha}=Z_{\alpha}u,\ u_{\alpha \bar{\beta}}=Z_{\bar{\beta}%
}Z_{\alpha}u-\omega_{\alpha}{}^{\gamma}(Z_{\bar{\beta}})Z_{\gamma}u$. The
subgradient $\nabla_{b}\varphi$ of a smooth real-valued function $\varphi$ is
defined by
\[
\left \langle \nabla_{b}\varphi,Z\right \rangle _{L_{\theta}}=Z\varphi
\]
for $Z\in \Gamma \left(  \xi \right)  $ where $\Gamma \left(  \xi \right)  $
denotes the family of all smooth vector fields tangent to the conact plane
$\xi$. We could locally write the subgradient $\nabla_{b}\varphi$ as
\[
\nabla_{b}u=u_{\bar{\alpha}}Z_{\alpha}+u_{\alpha}Z_{\bar{\alpha}}.
\]

Accordingly, we could define the subhessian $Hess_{b}$ as the complex linear
map%
\[
Hess_{b}:T_{1,0}M\oplus T_{0,1}M\longrightarrow T_{1,0}M\oplus T_{0,1}M
\]
by%
\[
\left(  Hess_{b}\varphi \right)  Z=\nabla_{Z}\nabla_{b}\varphi
\]
for $Z\in \Gamma \left(  \xi \right)  $ and a smooth real-valued function
$\varphi$.

Next we recall the following commutation relations. (see \cite{l}) Let
$\varphi$ be a smooth real-valued function, $\sigma=\sigma_{\alpha}%
\theta^{\alpha}$ be a $\left(  1,0\right)  $-form and $\varphi_{0}=T\varphi,$
then we have%
\[
\left \{
\begin{array}
[c]{cll}%
\varphi_{\alpha \beta} & = & \varphi_{\beta \alpha},\\
\varphi_{\alpha \bar{\beta}}-\varphi_{\bar{\beta}\alpha} & = & ih_{\alpha
\overline{\beta}}\varphi_{0}\\
\varphi_{0\alpha}-\varphi_{\alpha0} & = & A_{\alpha \beta}\varphi^{\beta}\\
\sigma_{\alpha,\beta \gamma}-\sigma_{\alpha,\gamma \beta} & = & i\left(
A_{\alpha \gamma}\sigma_{\beta}-A_{\alpha \beta}\sigma_{\gamma}\right) \\
\sigma_{\alpha,\overline{\beta}\overline{\gamma}}-\sigma_{\alpha
,\overline{\gamma}\overline{\beta}} & = & -i(h_{\alpha \overline{\gamma}%
}A_{\overline{\beta}}^{\delta}\sigma_{\delta}-h_{\alpha \overline{\beta}%
}A_{\overline{\gamma}}^{\delta}\sigma_{\delta})\\
\sigma_{\alpha,\beta \overline{\gamma}}-\sigma_{\alpha,\overline{\gamma}\beta}
& = & ih_{\beta \overline{\gamma}}\sigma_{\alpha,}{}_{0}+R_{\alpha}^{\text{
}\delta}{}_{\beta \overline{\gamma}}\sigma_{\delta}\\
\sigma_{\alpha,0\beta}-\sigma_{\alpha,\beta0} & = & A^{\overline{\gamma}}%
{}_{\beta}\sigma_{\alpha,\bar{\gamma}}-A_{\alpha \beta,\bar{\gamma}}%
\sigma^{\overline{\gamma}}\\
\sigma_{\alpha,0\bar{\beta}}-\sigma_{\alpha,\bar{\beta}0} & = & A^{\gamma}%
{}_{\bar{\beta}}\sigma_{\alpha,\gamma}+A_{\bar{\gamma}\bar{\beta},\alpha
}\sigma^{\overline{\gamma}}%
\end{array}
\right.  .
\]

Last, we would introduce the concept of the adapted metric. A family of
Webster adapted metrics $h_{\epsilon}$ of a strictly pseudoconvex CR
$(2n+1)$-manifold $\left(  M,J,\theta \right)  $ are the Riemannian metrics%
\begin{equation}
h_{\epsilon}=h+\frac{1}{\epsilon^{2}}\theta^{2} \label{2017}%
\end{equation}
where $h=\left \langle \ ,\  \right \rangle _{L_{\theta}}$ is the Levi metric. In
\cite{cc}, they show that
\[
\Delta_{\epsilon}g=2\Delta_{b}g+\epsilon^{2}T^{2}g
\]
for any real-valued smooth function $g$ and%
\[
d\mu_{\epsilon}=\frac{1}{\epsilon2^{n}n!}d\mu
\]
where $d\mu \left(  y\right)  =dy=\theta \wedge \left(  d\theta \right)  ^{n}$ is
a volume form on $\left(  M,J,\theta \right)  $. Here $\Delta_{\epsilon}$ and
$d\mu_{\epsilon}$ are the Riemannian Laplace operator and the Riemannian
volume element with respective to the adapted Riemannian metric $h_{\epsilon}%
$, respectively. In this paper, we choose $\epsilon=\frac{1}{2^{n-1}n!}$such
that $d\mu_{\epsilon}=\frac{1}{2}d\mu$.

\section{The CR heat equation}

In this section, we will derive the essential fact about the existence of
solutions to the CR heat equation on a complete noncompact pseudohermitian
$(2n+1)$-manifold. First of all, we give the following lemmas so that one can
establish the existence of solutions to the CR heat equation. It is also very
useful to study the CR Poincar\'{e}-Lelong equation (\cite{cchl}).

We will use the semigroup method in \cite{bbgm}. It is known that the heat
semigroup $\left(  P_{t}\right)  _{t\geq0}$ is given by
\[
P_{t}=\int_{0}^{\infty}e^{-\lambda t}dE_{\lambda}%
\]
for the spectral decomposition of $\Delta_{b}=-\int_{0}^{\infty}\lambda
dE_{\lambda}$ in $L^{2}\left(  M\right)  $. It is a one-parameter family of
bounded operators on $L^{2}\left(  M\right)  .$ We denote%
\[
P_{t}f\left(  x\right)  =\int_{M}H\left(  x,y,t\right)  f\left(  y\right)
d\mu \left(  y\right)  .
\]
Here $H\left(  x,y,t\right)  >0$ is the so-called symmetric heat kernel
associated to $P_{t}$. Due to hypoellipticity of $\Delta_{b},$ the function
$\left(  x,t\right)  \rightarrow P_{t}f\left(  x\right)  $ is smooth on
$M\times \left(  0,\infty \right)  ,\ f\in C_{0}^{\infty}(M).$ Moreover%
\[
u\left(  x,t\right)  =P_{t}f(x)
\]
is a solution of CR heat equation%
\[
\left \{
\begin{array}
[c]{l}%
\frac{\partial u}{\partial t}=\Delta_{b}u\\
u\left(  x,0\right)  =f\left(  x\right)
\end{array}
\right.  .
\]
Under these settings, we're going to prove some results derived by estimating
the CR heat kernel $H\left(  x,y,t\right)  $. In the following, $V_{x}(r)$
denotes the volume of the geodesic ball $B_{x}(r)$ with respect to the
Carnot-Carath\'{e}odory distance $r(x,y)$ between $x$ and $y$, and
$r(x)=r(x,o)$ where $o\in M$ is a fixed point.

\begin{theorem}
\label{T31} Let $(M,J,\theta)$ be a complete noncompact pseudohermitian
$(2n+1)$-manifold with nonnegative pseudohermitian Ricci curvature tensors and
vanishing pseudohermitian torsion. If $f$ is a continuous function on $M$ such
that
\[
\frac{1}{V(B_{o}(r))}%
{\displaystyle \int \limits_{B_{o}(r)}}
|f|(x)dx\leq exp(ar^{2}+b)
\]
for some positive constants $a>0$ and $b>0$, then the following initial value
problem%
\begin{equation}
\left \{
\begin{array}
[c]{l}%
(\frac{\partial}{\partial t}-\triangle_{b})v(x,t)=0\\
v(x,0)=f(x)
\end{array}
\right.  \label{313}%
\end{equation}
has a solution on $M\times(0,\frac{C}{a}]$; moreover,
\[
v(x,t)=%
{\displaystyle \int \limits_{M}}
H(x,y,t)f(y)dy,
\]
where $H(x,y,t)$ is the Heat kernel on $(M,J,\theta)$.
\end{theorem}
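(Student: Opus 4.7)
The plan is to construct the solution explicitly via heat kernel convolution $v(x,t)=\int_M H(x,y,t)f(y)\,d\mu(y)$, imitating the Karp--Li--Grigor'yan strategy for Riemannian heat equations with exponentially growing data. Since we have a Sasakian structure (vanishing torsion) with nonnegative pseudohermitian Ricci, the Bishop-type volume comparison yields the volume doubling property for Carnot--Carath\'eodory balls, and one has a Gaussian-type upper bound
$$
H(x,y,t)\leq \frac{C_1}{V_x(\sqrt{t})}\exp \! \left(-\frac{r^2(x,y)}{C_2 t}\right),
$$
which can be derived through the adapted metric $h_\epsilon$ together with the relation $\Delta_\epsilon=2\Delta_b+\epsilon^2 T^2$ stated in Section~2, or directly from the CR Li--Yau gradient estimate under the same curvature/torsion hypothesis.

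With these bounds in hand, I would dyadically decompose $M=B_o(r_0)\cup \bigcup_{k\geq 0}(B_o(2^{k+1}r_0)\setminus B_o(2^k r_0))$ and estimate
$$
\int_M H(x,y,t)|f(y)|\,d\mu(y)\leq \frac{C_1}{V_x(\sqrt{t})}\sum_{k\geq 0} e^{-c\,2^{2k}r_0^2/t}\int_{B_o(2^{k+1}r_0)}|f|\,d\mu.
$$
Plugging in the hypothesis $V(B_o(r))^{-1}\int_{B_o(r)}|f|\,d\mu \leq e^{ar^2+b}$ together with polynomial volume growth (a consequence of volume doubling), the $k$-th summand is bounded by $C\exp\bigl((a-c/t)\,2^{2(k+1)}r_0^2+b\bigr)$, so the series converges exactly when $t\leq C/a$ for $C=1/c$. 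This gives absolute convergence of $v(x,t)$ on $M\times(0,C/a]$, and a dominated-convergence argument allows differentiation under the integral; hypoellipticity of $\Delta_b$ then guarantees smoothness and the pointwise validity of $(\partial_t-\Delta_b)v=0$ on $M\times(0,C/a]$.

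The main obstacle is verifying the initial condition $\lim_{t\to 0^+}v(x,t)=f(x)$ for a merely continuous $f$ of exponential-average growth, since the semigroup $(P_t)$ is a priori defined only on $L^2(M)$. I would perform a local/global split: fix $x$ and write $f=f\chi_{B_x(R)}+f(1-\chi_{B_x(R)})$. The first piece is bounded and continuous, so standard heat kernel properties (together with the CR $L^p$-submean value inequality deferred to the Appendix, used to extract uniform $L^\infty$ control from $L^p$ averages) give convergence to $f(x)$ as $t\to 0^+$. The second piece is controlled by the Gaussian tail sum above, where the exponent $e^{-cR^2/t}$ beats the data growth on the complement and drives the contribution to $0$ uniformly in small $t$. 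The delicate point is precisely that the $L^p$-submean value inequality for $\Delta_b$ is not a direct translation of its Riemannian counterpart; it requires Moser iteration adapted to the sub-Laplacian on Carnot--Carath\'eodory balls, which is why the authors isolate its full proof in the Appendix.
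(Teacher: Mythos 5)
Your overall strategy --- define $v(x,t)=\int_M H(x,y,t)f(y)\,d\mu(y)$ directly and verify convergence through the Gaussian upper bound plus volume doubling --- is a legitimate alternative to the paper's argument, which instead truncates the data ($f_j=\varphi_j f$ with compactly supported cutoffs), solves the problem for each $f_j$, controls the $L^1$-averages of the solutions $v_j$ by Lemma \ref{l31}, upgrades these to locally uniform sup bounds via the CR $L^p$ submean value inequality, extracts a locally uniformly convergent subsequence, and identifies the limit with the kernel integral by the same tail estimate you use. However, as written your proof has a genuine gap at the step ``a dominated-convergence argument allows differentiation under the integral.'' To differentiate $\int_M H(x,y,t)f(y)\,d\mu(y)$ in $t$, or to apply $\Delta_b$ in $x$, under an integral over the noncompact manifold, you need Gaussian-type upper bounds on $\partial_t H$ and on the horizontal second derivatives of $H$, uniform as $y\to\infty$; Proposition \ref{p41} bounds only $H$ itself, and such derivative bounds are not automatic in the sub-Riemannian setting. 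The standard repair is precisely the exhaustion device you skipped: the truncated integrals $u_R(x,t)=\int_{B_o(R)}H(x,y,t)f(y)\,d\mu(y)$ are genuine smooth solutions (the $y$-domain is compact), they converge locally uniformly to $v$ by your tail estimate, hence $v$ solves the equation in the distributional sense and is smooth by hypoellipticity of $\partial_t-\Delta_b$; alternatively, as in the paper, one works with approximating solutions and interior estimates. Your invocation of hypoellipticity is only available after such a limit-of-solutions argument, not as a consequence of ``differentiation under the integral.''

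A second, smaller misstep is the role you assign to the CR $L^p$ submean value inequality. In the paper it has nothing to do with the initial condition: it converts the $L^1$ bound of Lemma \ref{l31} on the $v_j$ into the locally uniform bound $\sup|v_j|\le C\exp(16aR^2+b)$, which is what makes the compactness step work. For the attainment of $v(x,0)=f(x)$ with $f$ merely continuous, what you actually need is the stochastic completeness $\int_M H(x,y,t)\,d\mu(y)=1$ (quoted from \cite{bbgm}) together with the Gaussian tail bound, which forces the heat kernel mass to concentrate at $x$ as $t\to0^{+}$; your local/global split then goes through, but note that $f\chi_{B_x(R)}$ is bounded yet not continuous, so you should argue concentration at the single point $x$ using continuity of $f$ there rather than cite a statement for continuous bounded data. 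Finally, the admissible time threshold in your dyadic sum must also absorb the comparison of $V_x(\sqrt t)$ with $V_o(s)$ via doubling (this is where the paper's explicit choice $t\le C_5/(8a)$ comes from), so make that bookkeeping explicit rather than leaving the constant $C$ implicit.
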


The proof is based on the effect estimate on the CR heat kernel, CR volume
doubling property and $L^{p}$ submean value inequality which do not need the
stronger assumption on vanishing pseudohermitian torsion as in Proposition
\ref{p41}. However, for the later purpose as in the following sections, we do
need the condition of vanishing torsion, in which the heat kernel estimate is
obtained also in \cite{bbgm}. Then here we focus on the case of a complete
noncompact pseudohermitian $(2n+1)$-manifold of vanishing torsion only.

\begin{proposition}
(\cite{ccht}, \cite{bbgm}) \label{p41} Let $(M,J,\theta)$ be a complete
pseudohermitian $\left(  2n+1\right)  $-manifold with the pseudohermitian
Ricci curvature
\[
Ric\left(  X,X\right)  \geq k_{0}\left \langle X,X\right \rangle _{L_{\theta}}%
\]
and \
\[
\sup_{i,j\in I_{n}}\left \vert A_{ij}\right \vert \leq k_{1}<\infty \text{
\textrm{and }}\sup_{i,j\in I_{n}}|A_{ij,\bar{\imath}}|^{2}\leq k_{2}<\infty,
\]
for $X=X^{\alpha}Z_{\alpha}\in T_{1,0}M$ and $k_{0},\ k_{1},\ k_{2}$ are
constants with $k_{1},\ k_{2}\geq0.$ Then

(i) There exist positive constants $C_{3},C_{4},C_{5}$ such that for $x,\ y\in
M,\ t>0$%
\begin{equation}
H\left(  x,y,t\right)  \leq \frac{C_{3}}{V_{x}\left(  \sqrt{t}\right)
^{\frac{1}{2}}V_{y}\left(  \sqrt{t}\right)  ^{\frac{1}{2}}}\exp(-C_{5}%
\frac{d_{cc}^{2}\left(  x,y\right)  }{t}+C_{4}\kappa t). \label{2016a}%
\end{equation}

(ii) There exist positive constants $C_{6},C_{7},C_{8}$ such that for
$x,\ y\in M,\ t>0$
\begin{equation}
H(x,y,t)\geq \frac{C_{6}}{V_{x}\left(  \sqrt{t}\right)  }\exp[-C_{7}%
\frac{d_{cc}{}^{2}\left(  x,y\right)  }{t}-C_{8}\kappa(t+d_{cc}^{2}(x,y))].
\label{2016b}%
\end{equation}

(iii) There exist positive constants $C_{9},C_{10}$ such that for $0<s<t$
\begin{equation}
\frac{H(x,x,s)}{H(x,x,t)}\, \leq \, \left(  \frac{t}{s}\right)  ^{C_{9}%
}e^{C_{10}\kappa(t-s)}. \label{2016c}%
\end{equation}

(iv) (CR Volume Doubling Property) For any $\sigma>1$, then there exist a
positive constant $C_{1}$ such that
\[
V_{x}\left(  \sigma \rho \right)  \leq C_{1}\sigma^{2C_{9}}e^{(C_{1}\sigma
^{2}+C_{8})\kappa \rho^{2}}V_{x}\left(  \rho \right)  .
\]
Here $\kappa=\kappa(k_{0},k_{1},k_{2})\geq0$. In particular, if $(M,J,\theta)$
admits nonnegative pseudohermitian Ricci curvature tensor and vanishing
pseudohermitian torsion, then those constants $C_{j}$ chosen as precedes all
depend on the CR dimension $n$ and the constant $\kappa=0$.
\end{proposition}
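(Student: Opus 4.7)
The plan is to derive all four parts (i)--(iv) from a single generalized curvature-dimension inequality of Baudoin-Garofalo type adapted to the sub-Laplacian $\Delta_b$, exactly as in the CR-setting of \cite{bbgm}. First I would set up the CR Bochner identity: for a smooth real-valued $f$, compute $\tfrac{1}{2}\Delta_b|\nabla_b f|^2$, expressing it in terms of $|\mathrm{Hess}_b f|^2$, the inner product $\langle \nabla_b f, \nabla_b \Delta_b f\rangle$, the Webster Ricci term $R_{\alpha\bar\beta}f^\alpha f^{\bar\beta}$, and torsion corrections. Introducing the carré du champ $\Gamma(f)=|\nabla_b f|^2$ and the vertical bilinear form $\Gamma^Z(f)=(Tf)^2$, together with their iterated versions $\Gamma_2, \Gamma_2^Z$ defined via the usual $\tfrac12(\Delta_b\Gamma(f)-2\Gamma(f,\Delta_b f))$ formula, the commutation relations recalled in Section~2 produce cross-terms between horizontal and vertical derivatives whose coefficients are controlled by $k_1,k_2$.

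Next, using the hypotheses $\mathrm{Ric}\ge k_0$, $|A_{ij}|\le k_1$, and $|A_{ij,\bar\imath}|^2\le k_2$, I would bound the torsion cross-terms by Cauchy-Schwarz with a free parameter $\nu>0$, yielding a generalized CD$(\rho_1,\rho_2,\kappa,d)$ inequality of the form
\begin{equation*}
\Gamma_2(f)+\nu\,\Gamma_2^Z(f)\;\ge\;\tfrac{1}{2n}(\Delta_b f)^2+\bigl(k_0-\tfrac{\kappa}{\nu}\bigr)\Gamma(f)+\rho_2\,\Gamma^Z(f),
\end{equation*}
with $\kappa=\kappa(k_0,k_1,k_2)\ge0$. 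In the Sasakian (vanishing-torsion) case all torsion contributions drop out so $\kappa=0$ and the constants depend only on the CR dimension $n$, which is precisely the conclusion of the last sentence of the proposition.

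From this CD inequality the plan is to run the BBGM semigroup machinery: first derive reverse Poincaré and Wang-type entropy inequalities for $P_t=e^{t\Delta_b}$, then a CR Li-Yau gradient estimate of the form $\Gamma(\log P_t f)\le C\bigl(\tfrac{1}{t}+\kappa\bigr)+C'\partial_t \log P_t f$, which upon integration along space-time paths produces a parabolic Harnack inequality with the $e^{C\kappa t}$ shift. The standard Grigor'yan/Saloff-Coste equivalences then convert the Harnack inequality into the two-sided Gaussian heat-kernel bounds (i) and (ii), where $C_3,\dots,C_8$ are determined by $n$ and the CD constants and the $e^{C_4\kappa t}$, $e^{-C_8\kappa(t+d_{cc}^2)}$ factors absorb the lower Ricci/torsion shift. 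The ratio estimate (iii) follows from the on-diagonal Harnack chain connecting times $s<t$ along a stationary space curve. Finally, the CR volume doubling (iv) is obtained by combining the on-diagonal upper bound $H(x,x,t)\le C/V_x(\sqrt{t})$ with the on-diagonal lower bound $H(x,x,t)\ge C'/V_x(\sqrt{t})$ and the ratio (iii), together with the mass normalization $\int_M H(x,y,t)\,d\mu(y)=1$ restricted to $B_x(\sigma\rho)$, which forces $V_x(\sigma\rho)\le C_1\sigma^{2C_9}e^{(C_1\sigma^2+C_8)\kappa\rho^2}V_x(\rho)$.

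The hardest step is the second one: algebraically taming the commutator terms $[\Delta_b,T]$ and the $A_{ij}$, $A_{ij,\bar\imath}$ contributions in $\Gamma_2^Z$ so that the generalized CD inequality closes with uniform constants. This is where the specific bounds $k_1,k_2$ enter, and where \emph{vanishing torsion} causes a dramatic simplification: the coupling between $\Gamma_2$ and $\Gamma_2^Z$ decouples (one may even let $\nu\to\infty$), and one recovers a clean CD$(k_0,\rho_2,0,2n)$ inequality that directly yields dimension-only constants. Once that inequality is in place, the remaining derivations (Li-Yau, Harnack, Gaussian bounds, doubling) are by-now-standard consequences in the BBGM framework and require no additional CR-specific input beyond the commutator formulas of Section~2.
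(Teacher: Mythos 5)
First, a point of comparison: the paper itself gives no proof of Proposition \ref{p41} --- it is imported verbatim from \cite{bbgm} and \cite{ccht} --- so the only meaningful comparison is with the argument of those references. At the level of architecture your outline reconstructs that argument correctly: a generalized curvature--dimension inequality of Baudoin--Garofalo type for $\Delta_b$ obtained from the CR Bochner formula and the commutation relations of Section~2, then a Li--Yau-type gradient estimate, a parabolic Harnack inequality, the two-sided Gaussian bounds (i)--(ii), the on-diagonal ratio (iii), and volume doubling (iv) extracted from the on-diagonal upper and lower bounds together with (iii). That is indeed the route of \cite{bbgm}.

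There is, however, one genuine error in the middle of your plan. You assert that in the Sasakian (vanishing-torsion) case ``all torsion contributions drop out so $\kappa=0$'' in the curvature--dimension inequality, that the horizontal--vertical coupling ``decouples,'' and that ``one may even let $\nu\to\infty$.'' This conflates two different constants. The parameter $\kappa$ in the Baudoin--Garofalo inequality
\[
\Gamma_2(f)+\nu\,\Gamma_2^Z(f)\ \ge\ \tfrac{1}{d}(\Delta_b f)^2+\bigl(\rho_1-\tfrac{\kappa}{\nu}\bigr)\Gamma(f)+\rho_2\,\Gamma^Z(f)
\]
is a strictly positive structural constant for every genuinely sub-Riemannian structure, including the Heisenberg group and all Sasakian manifolds: it controls the cross term $\langle J\nabla_b f,\nabla_b(Tf)\rangle$ produced by the commutators $[Z_\alpha,Z_{\bar\beta}]=ih_{\alpha\bar\beta}T$, which survives even when the pseudohermitian torsion $A_{\alpha\beta}$ vanishes. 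If that $\kappa$ were zero one could let $\nu\to0^{+}$ and deduce a Bakry--\'Emery $CD(k_0,2n)$ inequality for $\Delta_b$, which is known to be false for sub-Laplacians; and letting $\nu\to\infty$ merely yields a trivial inequality (in the Sasakian case $\Gamma_2^Z(f)=|\nabla_b Tf|^2\ge0$ because $T$ commutes with $\Delta_b$), so nothing is gained. The constant $\kappa=\kappa(k_0,k_1,k_2)$ in the statement of the proposition is a different object: it is the defect entering the exponential correction factors $e^{C\kappa t}$, and it vanishes precisely when $Ric\ge0$ and $A\equiv0$. The correct Sasakian simplification is therefore not that the curvature--dimension inequality degenerates to an elliptic one, but that the Li--Yau and Harnack constants become time-independent because the curvature and torsion defects vanish; the full coupled $\Gamma_2^Z$ machinery with finite $\nu$ is still needed throughout, and your ``hardest step'' does not in fact disappear in the torsion-free case.
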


\begin{lemma}
\label{l31} Let $(M,J,\theta)$ be a complete noncompact pseudohermitian
$(2n+1)$-manifold of vanishing torsion with nonnegative pseudohermitian Ricci
curvature tensors. Assume that $u$ is defined by
\[
u(x,t)=\int_{M}H(x,y,t)f(y)dy,
\]
on $M\times \lbrack0,T]$ for some $T>0.$ Here $f$ is a nonnegative function
with
\begin{equation}
\lim_{r\rightarrow+\infty}exp(-\frac{C_{5}\left(  n\right)  r^{2}}{2t}%
)\int_{B_{o}(r)}f(x)dx=0. \label{301}%
\end{equation}
Then for any $0<t\leq r^{2}$, and $p\geq1$,%
\begin{equation}%
\begin{array}
[c]{ccl}%
\frac{1}{V(B_{o}(r))}\int_{B_{o}(r)}u^{p}(x,t)dx & \leq & C(n,p)[\frac
{1}{V(B_{o}(4r))}\int_{B_{o}(4r)}f^{p}(x)dx\\
&  & +(\frac{1}{t}\int_{4r}^{\infty}exp(-\frac{C_{5}(n)s^{2}}{4t})s(\frac
{1}{V(B_{o}(s))}\int_{B_{o}(s)}f(x)dx)ds)^{p}].
\end{array}
\label{302}%
\end{equation}

\end{lemma}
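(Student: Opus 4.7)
The plan is to decompose $f = f_1 + f_2$ with $f_1 = f\,\chi_{B_o(4r)}$ and $f_2 = f - f_1$, and to split $u = u_1 + u_2$ accordingly. The elementary inequality $u^p \leq 2^{p-1}(u_1^p + u_2^p)$ reduces the problem to estimating the averages of $u_1^p$ and $u_2^p$ on $B_o(r)$ separately.

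For the near part $u_1$, I would appeal to the sub-probability property $\int_M H(x,y,t)\,dy \leq 1$ together with Jensen's inequality applied to the convex map $s \mapsto s^p$, which gives $u_1(x,t)^p \leq \int_M H(x,y,t) f_1(y)^p\,dy$. Integrating over $x \in B_o(r)$, interchanging order by Fubini, and then using the symmetric bound $\int_{B_o(r)} H(x,y,t)\,dx \leq 1$ yields $\int_{B_o(r)} u_1^p\,dx \leq \int_{B_o(4r)} f^p\,dy$. Dividing by $V(B_o(r))$ and invoking the CR volume doubling property (Proposition \ref{p41}(iv), with $\kappa = 0$ under the present curvature hypotheses) produces the first term of \eqref{302}.

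For the far part $u_2$, the key ingredient is the Gaussian upper bound in Proposition \ref{p41}(i). For $x \in B_o(r)$ and $y \notin B_o(4r)$ the triangle inequality gives $d_{cc}(x,y) \geq r(y) - r \geq \tfrac{3}{4}r(y)$. Spending a portion of the resulting exponential to absorb the polynomial correction obtained when trading $V_x(\sqrt{t})^{1/2}V_y(\sqrt{t})^{1/2}$ for $V(B_o(r(y)))$ via repeated use of doubling (e.g.\ through the chains $B_y(\sqrt{t}) \subset B_x(d_{cc}(x,y)+\sqrt{t})$ and $B_o(r(y)) \subset B_y(2r(y))$) yields a pointwise majorant depending only on $r(y)$:
\[
u_2(x,t) \leq C(n)\int_{M\setminus B_o(4r)} \frac{1}{V(B_o(r(y)))}\exp\!\left(-\frac{C_5(n)\,r(y)^2}{4t}\right) f(y)\,dy.
\]

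Finally, setting $F(s):=\int_{B_o(s)} f\,dy$ and integrating by parts against $dF$ (or, equivalently, summing over a dyadic annular decomposition $A_k = B_o(2^{k+1}r)\setminus B_o(2^kr)$) reproduces the claimed integrand $\tfrac{s}{t}\exp(-C_5(n) s^2/(4t))\,F(s)/V(B_o(s))$: the factor $s/t$ is the logarithmic derivative of the Gaussian in $s$, while the volume-derivative contribution is absorbed by shaving a further sliver off the exponent through doubling. Hypothesis \eqref{301} is precisely what ensures the boundary term at infinity vanishes. Since the resulting pointwise bound on $u_2$ is independent of $x \in B_o(r)$, raising to the $p$-th power and averaging over $B_o(r)$ gives the second term of \eqref{302}. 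I expect the main technical difficulty to lie in this bookkeeping: performing the volume-doubling absorptions so that the specific constant $C_5(n)/4$ in \eqref{302} is retained, and checking that the boundary term at infinity really vanishes under the relatively weak growth assumption \eqref{301}.
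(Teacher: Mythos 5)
Your overall strategy coincides with the paper's: split $f$ at $B_o(4r)$, treat the near part via Jensen's inequality, Fubini and $\int_M H(x,y,t)\,dy=1$ together with one application of CR volume doubling (this part of your argument is complete), and treat the far part via the Gaussian upper bound of Proposition \ref{p41}(i), the inequality $d_{cc}(x,y)\geq\tfrac34 r(y)$, doubling, and an integration by parts in which (\ref{301}) kills the boundary term. The difference is the \emph{order} in which you spend the exponential, and as literally written this breaks two steps in the far part. First, you absorb all volume ratios into the exponential \emph{before} integrating by parts, arriving at a pointwise majorant with exponent $C_5 r(y)^2/(4t)$ and weight $1/V(B_o(r(y)))$. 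The boundary term at infinity in your subsequent integration by parts against $dF(s)$ is then $e^{-C_5 s^2/(4t)}F(s)/V(B_o(s))$, whereas hypothesis (\ref{301}) only guarantees $e^{-C_5 s^2/(2t)}F(s)\rightarrow 0$; since $e^{-C_5 s^2/(4t)}$ decays strictly more slowly and doubling provides no useful lower bound on $V(B_o(s))$, the vanishing of your boundary term does not follow from (\ref{301}) at the same time $t$. Second, differentiating $e^{-C_5 s^2/(4t)}/V(B_o(s))$ produces, besides the desired $\tfrac{s}{t}$ term, a term involving $V'(s)/V(s)^2$ of the same sign, which therefore cannot be discarded, and doubling gives no pointwise control of $V'/V$; your dyadic alternative avoids derivatives but then needs extra exponential slack, which you have already used up by insisting on the constant $C_5/4$ in the intermediate majorant.

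Both problems disappear if you follow the paper's ordering: keep the factor $1/V(B_o(\sqrt t))$ (obtained from $B_o(\sqrt t)\subset B_x(r+\sqrt t)$ and doubling) \emph{outside} the integral, pass to shells and integrate by parts while the exponent is still $C_5 s^2/(2t)$, so that (\ref{301}) applies verbatim to the boundary term, and only afterwards use doubling to bound $V(B_o(s))/V(B_o(\sqrt t))\leq C(s/\sqrt t)^{2C_9}$ and the factor $((r+\sqrt t)/\sqrt t)^{2C_9}$, paying for these polynomial factors by relaxing the exponent from $C_5/2$ to $C_5/4$ at the very end. Equivalently, you may keep your ordering but reserve a strictly larger constant (say $9C_5/32$) in the pointwise majorant and state (\ref{301}) at a comparable time; either repair yields exactly the paper's estimate (\ref{302}).
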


\begin{proof}
For any $p\geq1$ and $r\geq \sqrt{t}$,%
\begin{equation}%
\begin{array}
[c]{ccl}%
\int_{B_{o}(r)}u^{p}(x,t)dx & = & \int_{B_{o}(r)}(\int_{M}H(x,y,t)f(y)dy)^{p}%
dx\\
& \leq & 2^{p-1}[\int_{B_{o}(r)}(\int_{B_{o}(4r)}H(x,y,t)f(y)dy)^{p}dx\\
&  & +\int_{B_{o}(r)}(\int_{M\backslash B_{o}(4r)}H(x,y,t)f(y)dy)^{p}dx].
\end{array}
\label{303}%
\end{equation}
We first estimate the second term on the right hand side in $\left(
\ref{303}\right)  $. Now for $x\in B_{o}(r)$ and $y\in M\backslash B_{o}(4r)$,
we have $r(x,y)\geq \frac{3}{4}r(y)$. By the estimates of the CR heat kernel as
in Proposition \ref{p41}, we have%
\begin{equation}%
\begin{array}
[c]{cl}
& \int_{M\backslash B_{o}(4r)}H(x,y,t)f(y)dy\\
\leq & C\int_{M\backslash B_{o}(4r)}\frac{1}{V(B_{x}(\sqrt{t}))}%
exp(-\frac{C_{5}r^{2}(x,y)}{t})f(y)dy\\
\leq & C\int_{M\backslash B_{o}(4r)}\frac{1}{V(B_{x}(r+\sqrt{t}))}%
(\frac{r+\sqrt{t}}{\sqrt{t}})^{2C_{9}}exp(-\frac{C_{5}r^{2}(x,y)}{t})f(y)dy\\
\leq & \frac{C}{V(B_{o}(\sqrt{t}))}\int_{M\backslash B_{o}(4r)}(\frac
{r+\sqrt{t}}{\sqrt{t}})^{2C_{9}}exp(-\frac{C_{5}r^{2}(x,y)}{t})f(y)dy\\
\leq & \frac{C}{V(B_{o}(\sqrt{t}))}(\frac{r+\sqrt{t}}{\sqrt{t}})^{2C_{9}}%
\int_{4r}^{\infty}[exp(-\frac{C_{5}s^{2}}{2t})(\int_{\partial B_{o}(s)}f)]ds\\
\leq & \frac{C}{V(B_{o}(\sqrt{t}))}(\frac{r+\sqrt{t}}{\sqrt{t}})^{2C_{9}}%
\int_{4r}^{\infty}[exp(-\frac{C_{5}s^{2}}{2t})(\int_{B_{o}(s)}f)]d(\frac
{s^{2}}{t})\\
\leq & C(\frac{r+\sqrt{t}}{\sqrt{t}})^{2C_{9}}\int_{4r}^{\infty}[\frac
{V(B_{o}(s))}{V(B_{o}(\sqrt{t}))}exp(-\frac{C_{5}s^{2}}{2t})(\frac{1}%
{V(B_{o}(s))}\int_{B_{o}(s)}f)]d(\frac{s^{2}}{t})\\
\leq & C\int_{4r}^{\infty}(\frac{s}{\sqrt{t}})^{4C_{9}}exp(-\frac{C_{5}s^{2}%
}{2t})(\frac{1}{V(B_{o}(s))}\int_{B_{o}(s)}f)d(\frac{s^{2}}{t})\\
\leq & C\frac{1}{t}\int_{4r}^{\infty}exp(-\frac{C_{5}s^{2}}{4t})s(\frac
{1}{V(B_{o}(s))}\int_{B_{o}(s)}f)ds.
\end{array}
\label{304}%
\end{equation}
where $C_{9}$ is the constant as in Proposition \ref{p41}. Here, besides
utilizing the inequality $r(x,y)\geq \frac{3}{4}r(y)$, we have used the volume
doubling property and the assumption (\ref{301}) when performing integration
by parts in the fifth inequality. Note that when separating $M\backslash
B_{o}(4r)$ into the shells, the concept of the Legendrian normal has come into
our case (see \cite{ccw}). From now on, once we deal with such process of the
integration, we keep the idea in mind.

As for the first term on the right hand side in $\left(  \ref{303}\right)  $,
by H\"{o}lder's inequality and the fact (\cite{bbgm}) that
\[
\int_{M}H(x,y,t)dy=1,
\]
we have
\[
(\int_{B_{o}(4r)}H(x,y,t)f(y)dy)^{p}\leq \int_{B_{o}(4r)}H(x,y,t)f^{p}(y)dy.
\]
Hence%
\begin{equation}%
\begin{array}
[c]{ccl}%
\int_{B_{o}(r)}(\int_{B_{o}(4r)}H(x,y,t))f(y)dy)^{p}dx & \leq & \int
_{B_{o}(r)}\int_{B_{o}(4r)}H(x,y,t)f^{p}(y)dydx\\
& \leq & \int_{B_{o}(4r)}f^{p}(y)(\int_{B_{o}(r)}H(x,y,t)dx)dy\\
& \leq & \int_{B_{o}(4r)}f^{p}(y)dy.
\end{array}
\label{305}%
\end{equation}
From (\ref{303})-(\ref{305}), we know that the Lemma holds.
\end{proof}

From the preceding lemma, we could control the upper bound of subsolutions to
the power of $p$ of the CR heat equation by its $L^{p}$-norm under the same
hypotheses. On the other hand, during the course of proving the existence of
solutions to the CR heat equation, we also need the $L^{p}$ submean value
inequality for $p\in \left(  0,\infty \right)  $, which will be shown in the Appendix.

\begin{proposition}
\label{P31} (CR $L^{p}$ submean value inequality ) Let $(M,J,\theta)$ be a
complete pseudohermitian $(2n+1)$-manifold of vanishing torsion with
nonnegative pseudohermitian Ricci curvature tensors. Let $Q(\tau
,x,r,s)=(s-\tau r^{2},s)\times B_{x}(r)$ and $Q_{\delta}(\tau,x,r,s)=(s-\delta
\tau r^{2},s)\times B_{x}(\delta r)$ for $\tau>0,$ $x\in M,$ $r>0,$ $s\in R,$
$\delta \in(0,1)$. If $\tau>0$ and $p\in \left(  0,+\infty \right)  $ are given,
then there exists a constant $A(\tau,\nu,p)>0$ such that if $u$ is a positive
subsolution to the CR heat equation $(\frac{\partial}{\partial t}%
-\triangle_{b})u\leq0$ in $Q(\tau,x,r,s)$, then, for $0<\delta<\delta^{\prime
}\leq1$,%
\[
\sup_{Q_{\delta}}\{u^{p}\} \leq \frac{A(\tau,\nu,p)}{\left(  \delta^{\prime
}-\delta \right)  ^{2+\nu}r^{2}V\left(  B_{x}(r)\right)  }\int_{Q_{\delta
^{\prime}}}u^{p}d\overline{\mu},
\]
where $d\overline{\mu}=d\mu dt$. \ Here the constant $\nu=3C_{9}$ is the
exponential constant in CR Sobolev inequality (\cite{ccht})%
\begin{equation}
(\int_{B_{x}(\rho)}|\varphi|^{\frac{2\nu}{\nu-2}}d\mu)^{^{\frac{\nu-2}{\nu}}%
}\leq C_{s}\rho^{2}V\left(  B_{x}(r)\right)  ^{-\frac{2}{\nu}}[\int
_{B_{x}(\rho)}|\nabla_{b}\varphi|^{2}d\mu+\rho^{-2}\int_{B_{x}(\rho)}%
\varphi^{2}d\mu] \label{p31}%
\end{equation}
for any $\varphi \in C_{c}^{\infty}(B_{x}(\rho))$, $x\in M.$
\end{proposition}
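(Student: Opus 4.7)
The plan is to adapt the classical parabolic Moser iteration to the sub-Riemannian setting, using the CR Sobolev inequality (\ref{p31}) as the main functional ingredient and drawing on the CR volume doubling property from Proposition \ref{p41}(iv) (which holds with $\kappa=0$ under our vanishing-torsion assumption) to keep uniform control of the constants across the iteration.

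First I would prove the parabolic energy (Caccioppoli) inequality. For a positive subsolution $u$ to $(\frac{\partial}{\partial t}-\Delta_{b})u\leq 0$ in $Q(\tau,x,r,s)$ and any exponent $p\geq 1$, I would test the differential inequality against $\phi=\eta^{2}u^{p-1}$, where $\eta=\eta(y,t)$ is a smooth cutoff supported in $Q_{\delta^{\prime}}$ and equal to $1$ on $Q_{\delta}$, and integrate by parts in the spatial variable via
\[
\int_{M}\Delta_{b}u\cdot \phi\, d\mu = -\int_{M}\langle\nabla_{b}u,\nabla_{b}\phi\rangle_{L_{\theta}}\, d\mu
\]
(only horizontal derivatives enter, because $\Delta_{b}$ has no $T$-component). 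The standard Young's inequality manipulation then yields
\[
\sup_{t}\int\eta^{2}u^{p}\, d\mu+\iint|\nabla_{b}(\eta u^{p/2})|^{2}\, d\overline{\mu}\leq C(p)\iint\bigl(|\nabla_{b}\eta|^{2}+\eta|\partial_{t}\eta|\bigr)u^{p}\, d\overline{\mu}.
\]

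Next, I would feed this energy bound into the CR Sobolev inequality (\ref{p31}) applied to $\eta u^{p/2}$ slicewise in $t$ and integrate over the time interval, producing a reverse-H\"{o}lder improvement of the form
\[
\left(\iint_{Q_{\delta}}u^{p\chi}\, d\overline{\mu}\right)^{1/\chi}\leq \frac{C(p,\tau)\,V(B_{x}(r))^{2/\nu}}{(\delta^{\prime}-\delta)^{2}\, r^{2}}\iint_{Q_{\delta^{\prime}}}u^{p}\, d\overline{\mu},
\]
where $\chi=\nu/(\nu-2)>1$. Running Moser iteration with $p_{k}=p\chi^{k}$ on shrinking cylinders $\delta_{k}=\delta+(\delta^{\prime}-\delta)2^{-k}$ and telescoping the constants—whose geometric summability rests on $\sum k\chi^{-k}<\infty$ and on the doubling-controlled volume factors from Proposition \ref{p41}(iv)—then delivers
\[
\sup_{Q_{\delta}}u^{p}\leq \frac{A(\tau,\nu,p)}{(\delta^{\prime}-\delta)^{2+\nu}\, r^{2}\, V(B_{x}(r))}\iint_{Q_{\delta^{\prime}}}u^{p}\, d\overline{\mu}
\]
for every $p\geq 1$.

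The main obstacle I anticipate is the extension to fractional exponents $p\in(0,1)$, where the test function $u^{p-1}$ is no longer admissible. To handle this I would invoke the Li--Schoen trick: set $\theta(\sigma)=\sup_{Q_{\sigma}}u$ and, applying the $p\geq 1$ estimate already obtained (with exponent one and the trivial bound $u^{1-p}\leq\theta(\sigma^{\prime})^{1-p}$ inside the integrand), derive the recursion
\[
\theta(\sigma)\leq \frac{C\,\theta(\sigma^{\prime})^{1-p}}{(\sigma^{\prime}-\sigma)^{2+\nu}\, r^{2}\, V(B_{x}(r))}\iint_{Q_{\sigma^{\prime}}}u^{p}\, d\overline{\mu}
\]
for all $\delta\leq\sigma<\sigma^{\prime}\leq\delta^{\prime}$. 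Iterating this inequality along the geometric sequence $\sigma_{k+1}-\sigma_{k}=(\delta^{\prime}-\delta)2^{-k-1}$ and absorbing the $\theta(\sigma_{k})^{1-p}$ factors via the standard iteration lemma yields the submean value inequality for all $p\in(0,+\infty)$. The technical heart of the argument will be ensuring that the CR integration by parts, the cutoff bounds, and the doubling-controlled constants from Proposition \ref{p41} interact cleanly enough at each scale for both iterations to converge with the claimed polynomial dependence $(\delta^{\prime}-\delta)^{-(2+\nu)}$.
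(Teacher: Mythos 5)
Your proposal is correct and follows essentially the same route as the paper's Appendix: a Caccioppoli energy estimate from testing with a cutoff times a power of $u$, the CR Sobolev inequality (\ref{p31}) to get the reverse-H\"older gain $\chi=\nu/(\nu-2)$, Moser iteration on shrinking cylinders with doubling-controlled constants, and then an absorption argument for small exponents. The only cosmetic difference is at the last step: the paper first settles $p=2$ (testing with $\psi^{2}u$), gets $p>2$ by the substitution $u\mapsto u^{p/2}$, and handles $p\in(0,2)$ by the interpolation bound $\Vert u\Vert_{L^{2}}\leq\Vert u\Vert_{L^{\infty}}^{1-p/2}\Vert u\Vert_{L^{p}}^{p/2}$ iterated on nested cylinders, whereas you test directly with $\eta^{2}u^{p-1}$ for $p\geq1$ and use the Li--Schoen absorption for $p\in(0,1)$ --- two standard, equivalent implementations of the same idea.
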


Now we are ready to prove \textbf{Theorem \ref{T31} : }

\begin{proof}
For all $j\geq1$, let $0\leq \varphi_{j}\leq1$ be a smooth cut-off function
such that $\varphi_{j}=1$ on $B_{o}(j)$ and $\varphi_{j}=0$ on $M\backslash
B_{o}(2j)$. Let $f_{j}=\varphi_{j}f$ be continuous with compact support. Hence
one can solve $\left(  \ref{313}\right)  $ with the initial value $f_{j}$ for
all time. The solution $v_{j}$ is given by
\[
v_{j}(x,t)=\int_{M}H(x,y,t)f_{j}(y)dy
\]
for $(x,t)\in M\times(0,\infty)$. The existence and uniqueness of such $v_{j}$
of the form could be found in \cite{li}. By Lemma \ref{l31}, for $0<t\leq
\min \left \{  r^{2},\frac{C_{5}}{8a}\right \}  $, we have%
\[%
\begin{array}
[c]{ccl}%
\frac{1}{V(B_{o}(r))}\int_{B_{o}(r)}|v_{j}|dx & \leq & \frac{1}{V(B_{o}%
(r))}\int_{B_{o}(r)}\int_{M}H(x,y,t)|f_{j}(y)|dydx\\
& \leq & C[\frac{1}{V(B_{o}(4r))}\int_{B_{o}(4r)}|f_{j}(y)|dy\\
&  & +\frac{1}{t}\int_{4r}^{\infty}exp(-\frac{C_{5}s^{2}}{4t})s(\frac
{1}{V(B_{o}(s))}\int_{B_{o}(s)}f_{j})ds]\\
& \leq & Ce^{b}[exp(16ar^{2})+\int_{4r}^{\infty}exp(-\frac{C_{5}s^{2}}%
{8t})d(\frac{s^{2}}{t})]\\
& \leq & Ce^{b}\left[  exp(16ar^{2})+1\right]  .
\end{array}
\]

Assume $\frac{C_{5}}{8a}\leq R^{2}$. Let $(\tau,x,r,s,p,\delta,\delta^{\prime
})=(2,o,R,\frac{C_{5}}{8a},1,\frac{1}{2},1)$ in the CR $L^{p}$ submean value
inequality, and we know that $|v_{j}|$ is a subsolution to the CR heat
equation, then we have%
\[%
\begin{array}
[c]{ccl}%
\underset{B_{o}(\frac{R}{2})\times(0,\frac{C_{5}}{8a})}{\sup}|v_{j}| & \leq &
\frac{A\left(  2,\nu,1\right)  }{\left(  \frac{1}{2}\right)  ^{2+\nu}%
R^{2}V\left(  B_{o}\left(  R\right)  \right)  }%
{\displaystyle \int \limits_{Q_{1}}}
\left \vert v_{j}\right \vert d\overline{\mu}\\
& \leq & 2^{3+\nu}A\left(  2,\nu,1\right)  \frac{1}{V(B_{o}(R))}%
{\displaystyle \int \limits_{B_{o}(R)}}
|v_{j}|dx\\
& \leq & Cexp(16aR^{2}+b).
\end{array}
\]

From this, it's easy to see that, after passing to a subsequence, $\left \{
v_{j}\right \}  _{j\in%
\mathbb{N}
}$ together with their derivatives uniformly converge on compact sets on
$M\times(0,\frac{C_{5}}{8a})$ to a solution $v$ of the CR heat equation.
Moreover, for any $(x,t)\in B_{o}\left(  \frac{j}{4}\right)  \times
(0,\frac{C_{5}}{8a})$ as in $\left(  \ref{304}\right)  $, we have%
\[%
\begin{array}
[c]{ccl}%
\left \vert
{\displaystyle \int \limits_{M}}
H(x,y,t)f\left(  y\right)  dy-v_{j}(x,t)\right \vert  & \leq & \left \vert
{\displaystyle \int \limits_{M}}
H(x,y,t)(f(y)-f_{j}(y))dy\right \vert \\
& \leq &
{\displaystyle \int \limits_{M\backslash B_{o}\left(  j\right)  }}
H(x,y,t)|f\left(  y\right)  |dy\\
& \leq & C\int_{j}^{\infty}exp(-\frac{C_{5}s^{2}}{4t})d(\frac{s^{2}}{t})\\
& \leq & C\int_{\frac{j^{2}}{t}}^{\infty}exp(-\frac{C_{5}\tau}{4})d\tau.
\end{array}
\]

Let $j$ tend to $+\infty$, we have
\[
v(x,t)=%
{\displaystyle \int \limits_{M}}
H(x,y,t)f(y)dy
\]

on $M\times \left(  0,\frac{C}{a}\right]  $. This completes the proof.
\end{proof}

\section{Rough Dimension Estimates}

After settling the existence of solutions to the CR heat equation, we will
give the asymptotic behavior of the solutions. It's a crucial step on the
course of proving the sharp dimension estimate. And it's worth to note that
because of the following lemma, we could drop the assumption that $M$ is of
maximum volume growth in the hypotheses of the sharp dimension estimate.
That's why the statement about the sharp dimension estimate in \cite{cfyz}
could hold without the assumption of the maximum volume growth as in \cite{n}.

\begin{lemma}
\label{l4.2} Let $(M,J,\theta)$ be a complete pseudohermitian $(2n+1)$%
-manifold of vanishing torsion with nonnegative pseudohermitian Ricci
curvature tensors. Assume that $v\left(  x,t\right)  $ is a solution to the CR
heat equation%
\[
\left(  \frac{\partial}{\partial t}-\Delta_{b}\right)  v\left(  x,t\right)
=0
\]
on $M\times \left(  0,+\infty \right)  $ with the initial condition
\[
v\left(  x,0\right)  =2\log \left \vert f\left(  x\right)  \right \vert ,
\]
where $f\in \mathcal{O}_{d}^{CR}\left(  M\right)  .$ Then
\begin{equation}
\underset{t\rightarrow+\infty}{\lim \sup}\frac{v\left(  x,t\right)  }{\log
t}\leq d. \label{314}%
\end{equation}

\end{lemma}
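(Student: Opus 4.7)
The plan is to represent $v(x,t)$ explicitly through the CR heat kernel via Theorem \ref{T31}, and then to bound the resulting integral by combining the Gaussian upper bound of Proposition \ref{p41}(i) with the polynomial growth of $f$.

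First I would verify that the initial datum $2\log|f|$ satisfies the hypothesis of Theorem \ref{T31}. Since $f\in \mathcal{O}_{d}^{CR}(M)$, the positive part is controlled by $2\log C_{0}+2d\log(1+r(y))$, which grows only logarithmically. The negative part near the zero set of $f$ is locally integrable because, via the transverse holomorphic description of the introduction, $f$ is locally a holomorphic function in the leaf direction; combined with the CR volume doubling of Proposition \ref{p41}(iv), this shows that $V(B_{o}(r))^{-1}\int_{B_{o}(r)}|2\log|f||\,d\mu$ grows no faster than $O(\log r)$, well within $\exp(ar^{2}+b)$. Theorem \ref{T31} then yields
\[
v(x,t)=\int_{M}H(x,y,t)\,\bigl(2\log|f(y)|\bigr)\,d\mu(y).
\]

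To upper-bound $v(x,t)$, I would discard the non-positive contribution from $\{|f|\leq 1\}$ and focus on $2d\log(1+r(y))$. With $\kappa=0$ (nonnegative Ricci, vanishing torsion), the Gaussian bound reduces to $H(x,y,t)\leq C_{3}V_{x}(\sqrt{t})^{-1}\exp(-C_{5}r^{2}(x,y)/t)$. Decomposing $M$ into the central ball $B_{x}(\sqrt{t})$ and the dyadic annuli $A_{k}=B_{x}(2^{k+1}\sqrt{t})\setminus B_{x}(2^{k}\sqrt{t})$, on the central ball $\log(1+r(y))\leq \tfrac{1}{2}\log t+O(1)$ and $\int_{B_{x}(\sqrt{t})} H\,d\mu\leq 1$; on $A_{k}$, $\log(1+r(y))\leq \tfrac{1}{2}\log t+(k+1)\log 2+O(1)$, and the product of the Gaussian factor $e^{-C_{5}4^{k}}$ with the volume-doubling factor $V(A_{k})/V_{x}(\sqrt{t})\leq C\cdot 2^{2C_{9}(k+1)}$ is summable in $k$ with rapid geometric decay.

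Summing the shells gives $\int_{M}H(x,y,t)\log(1+r(y))\,d\mu(y)\leq \tfrac{1}{2}\log t+O(1)$, hence $v(x,t)\leq d\log t+O(1)$ as $t\to+\infty$, and dividing by $\log t$ proves (\ref{314}). The main technical obstacle I anticipate is the local integrability of $\log|f|$ needed to make sense of $v(x,0)$ and apply Theorem \ref{T31}; if the direct transverse-holomorphic argument is not immediate in the Sasakian setting, the fallback is to regularize by $u_{0}^{\varepsilon}:=\log(|f|^{2}+\varepsilon)$, run the above estimate for the corresponding $v^{\varepsilon}$, and pass to the limit $\varepsilon\downarrow 0$ by monotone convergence, since the bound $v^{\varepsilon}(x,t)\leq d\log t+O(1)$ is uniform in $\varepsilon$.
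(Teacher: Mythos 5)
Your proposal is correct and follows essentially the same route as the paper: represent $v$ through the CR heat kernel via Theorem \ref{T31}, bound $\log|f|$ pointwise by its polynomial-growth majorant, and control the far-field contribution by dyadic annuli around $B_{x}(\sqrt{t})$ using the Gaussian upper bound of Proposition \ref{p41} together with CR volume doubling, giving $v(x,t)\leq d\log t+O(1)$. Your explicit attention to the integrability of the negative part of $\log|f|$ near the zero set (and the $\log(|f|^{2}+\varepsilon)$ regularization fallback) addresses a point the paper passes over silently, but it does not alter the structure of the argument.
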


\begin{proof}
From the fact $f\in \mathcal{O}_{d}^{CR}\left(  M\right)  $, we see that for
all $\epsilon>0$, there's a constant $C^{\prime}=C^{\prime}\left(
f,x,d,\epsilon \right)  >0$ such that%
\[
\left \vert f\left(  y\right)  \right \vert \leq C^{\prime}(1+r\left(
x,y\right)  )^{d+\epsilon}.
\]

Hereafter, in this proof, we would denote the positive constants by
$C^{\prime}=C^{\prime}\left(  f,x,d,\epsilon \right)  $ and $C=C\left(
n,d,\epsilon \right)  $. These constants may be different line by line.

Because
\[
\frac{1}{V(B_{o}(r))}%
{\displaystyle \int \limits_{B_{o}(r)}}
\log \left \vert f\left(  x\right)  \right \vert dx\leq exp(ar^{2}+b)
\]

for any $a>0$, then, from Theorem \ref{T31}, we know the solution $v\left(
x,t\right)  $ has the form%
\begin{equation}
v(x,t)=2%
{\displaystyle \int \limits_{M}}
H(x,y,t)\log \left \vert f\left(  y\right)  \right \vert dy \label{350}%
\end{equation}

on $M\times \left(  0,+\infty \right)  $.

Hence%
\[%
\begin{array}
[c]{ccl}%
v\left(  x,t\right)  & = & 2%
{\displaystyle \int \limits_{M}}
H(x,y,t)\log \left \vert f\left(  y\right)  \right \vert dy\\
& = &
{\displaystyle \int \limits_{\left[  r\left(  x,y\right)  \leq \sqrt{t}\right]
}}
H(x,y,t)\log(\left \vert f\left(  y\right)  \right \vert ^{2})dy+%
{\displaystyle \int \limits_{\left[  r\left(  x,y\right)  >\sqrt{t}\right]  }}
H(x,y,t)\log(\left \vert f\left(  y\right)  \right \vert ^{2})dy\\
& \leq &
{\displaystyle \int \limits_{\left[  r\left(  x,y\right)  \leq \sqrt{t}\right]
}}
H(x,y,t)\left(  \left(  d+\epsilon \right)  \log t+C^{\prime}\right)  dy\\
&  & +%
{\displaystyle \int \limits_{\left[  r\left(  x,y\right)  >\sqrt{t}\right]  }}
H(x,y,t)\left(  2\left(  d+\epsilon \right)  \log r\left(  x,y\right)
+C^{\prime}\right)  dy
\end{array}
\]
for $t>1$. By the CR heat kernel estimate and the CR volume doubling property
in \cite{ccht} and \cite{bbgm}, we obtain%
\[%
\begin{array}
[c]{cl}
& v\left(  x,t\right) \\
\leq & \left(  d+\epsilon \right)  \log t+C^{\prime}+%
{\displaystyle \int \limits_{\left[  r\left(  x,y\right)  >\sqrt{t}\right]  }}
\left(  d+\epsilon \right)  H\left(  x,y,t\right)  \log(\frac{r\left(
x,y\right)  ^{2}}{t})dy\\
\leq & C%
{\displaystyle \int \limits_{\left[  r\left(  x,y\right)  >\sqrt{t}\right]  }}
\frac{d+\epsilon}{V\left(  B_{x}\left(  \sqrt{t}\right)  \right)  }%
\exp(-C\frac{r\left(  x,y\right)  ^{2}}{2t})\log(\frac{r\left(  x,y\right)
^{2}}{t})dy+\left(  d+\epsilon \right)  \log t+C^{\prime}\\
\leq & C%
{\displaystyle \sum \limits_{k\geq0}}
{\displaystyle \int \limits_{\left[  2^{k}\sqrt{t}<r\left(  x,y\right)
\leq2^{k+1}\sqrt{t}\right]  }}
\frac{d+\epsilon}{V\left(  B_{x}\left(  \sqrt{t}\right)  \right)  }%
\exp(-C\frac{r\left(  x,y\right)  ^{2}}{2t})\log(\frac{r\left(  x,y\right)
^{2}}{t})dy+\left(  d+\epsilon \right)  \log t+C^{\prime}\\
\leq & C\left(  d+\epsilon \right)
{\displaystyle \sum \limits_{k\geq0}}
\left[  2^{2\left(  k+1\right)  C_{9}}\exp \left(  -2^{2k-1}C\right)  2\left(
k+1\right)  \right]  +\left(  d+\epsilon \right)  \log t+C^{\prime}\\
\leq & \left(  d+\epsilon \right)  \log t+C^{\prime}+C.
\end{array}
\]

Here the CR volume doubling property is used in the fourth inequality. Then we
are done.
\end{proof}

Now we give another lemma which is also essential for the proof of the rough
dimension estimate. In the proof of the following lemma, we would find that
it's closely related to the CR\ moment-type estimate as in \cite{cf}.

\begin{lemma}
\label{l43} Let $(M,J,\theta)$ be a complete pseudohermitian $(2n+1)$-manifold
of vanishing torsion with nonnegative pseudohermitian bisectional curvature
tensors. Assume that $v\left(  x,t\right)  $ is a solution to the CR heat
equation%
\begin{equation}
\left(  \frac{\partial}{\partial t}-\Delta_{b}\right)  v\left(  x,t\right)  =0
\label{319}%
\end{equation}
on $M\times \left(  0,+\infty \right)  $ with the initial condition%
\begin{equation}
v\left(  x,0\right)  =2\log \left \vert f\left(  x\right)  \right \vert
\label{320}%
\end{equation}
for $f\in \mathcal{O}_{d}^{CR}\left(  M\right)  $, and
\[
w\left(  x,t\right)  =\frac{\partial}{\partial t}v\left(  x,t\right)  .
\]
Then there exists a positive constant $C$ such that%
\begin{equation}
C(n)ord_{x}f\leq \lim_{t\rightarrow0^{+}}tw\left(  x,t\right)  . \label{316}%
\end{equation}

\end{lemma}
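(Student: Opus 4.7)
The plan is to follow Ni's heat-flow approach adapted to the CR setting, using the CR moment-type estimate of \cite{cf} as the key technical input. The strategy is to extract the leading logarithmic blow-up of $v(x,t)$ as $t\to 0^+$ from the local vanishing order of $f$ at $x$, and then convert that lower bound on $v$ into the claimed lower bound on $tw(x,t)$.

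First I would apply Theorem \ref{T31} to the initial datum $2\log|f|$. Since $f\in \mathcal{O}_d^{CR}(M)$ satisfies $\overline{\partial}_b f=0$ and $Tf=0$, in each transverse chart $U_{\alpha}$ the function $f$ pulls back from a holomorphic function on $V_{\alpha}\subset \mathbb{C}^n$; hence $\log|f|$ is locally integrable, while the polynomial growth of $f$ easily meets the exponential-quadratic hypothesis of Theorem \ref{T31}. This yields the representation
\[
v(x,t)=2\int_M H(x,y,t)\log|f(y)|\, dy,\qquad w(x,t)=\partial_t v(x,t)=\Delta_b v(x,t).
\]
Setting $m=ord_x(f)$, in transverse holomorphic coordinates $z=(z^1,\ldots,z^n)$ near $x$ one has $f(y)=P_m(z(y))+O(|z(y)|^{m+1})$ for a nonzero homogeneous polynomial $P_m$ of degree $m$, which gives the one-sided local bound $2\log|f(y)|\geq 2m\log|z(y)|-C_0$ on a fixed ball $B_x(\delta)$.

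Next I would split the heat integral at scale $\delta$. Over $M\setminus B_x(\delta)$, the sub-Gaussian off-diagonal decay of $H$ in Proposition \ref{p41} combined with the polynomial growth of $\log|f|$ produces an $O(1)$ contribution as $t\to 0^+$. Over $B_x(\delta)$, I would invoke the CR moment-type estimate of \cite{cf}, essentially the integration of the small-time CR heat kernel asymptotic against $\log|z|$ in the Heisenberg model, to obtain an asymptotic of the form
\[
\int_{B_x(\delta)} H(x,y,t)\log|z(y)|\, dy=\tfrac{1}{2}C(n)\log t+O(1),\qquad t\to 0^+,
\]
for an explicit dimensional constant $C(n)>0$. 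Combining the two pieces yields the one-sided asymptotic $v(x,t)\geq C(n)\,m\log t-C_1$ as $t\to 0^+$.

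Finally, to pass from this logarithmic lower bound on $v$ to a lower bound on $tw(x,t)$, I would use the integral identity $v(x,t)-v(x,s)=\int_s^t w(x,\sigma)\,d\sigma$ together with a CR Li--Yau-type trace Harnack inequality for $w=\Delta_b v$, valid in the Sasakian setting with nonnegative pseudohermitian bisectional curvature, which forces $\sigma w(x,\sigma)$ to be essentially monotone as $\sigma\to 0^+$. Dividing the previous asymptotic by $\log t$ and passing to the one-sided limit then produces $\liminf_{t\to 0^+} tw(x,t)\geq C(n)\cdot ord_x(f)$. The main obstacle is the CR moment estimate itself: because the Carnot--Carath\'{e}odory ball is anisotropic in the $T$-direction, one cannot naively replace $|z|$ by $d_{cc}$, and controlling the $T$-direction contribution to the moment integral is precisely where vanishing torsion and the Sasakian structure enter through \cite{cf}. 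A secondary technical point is verifying the quasi-monotonicity of $\sigma w(x,\sigma)$, which is exactly where the full nonnegative pseudohermitian bisectional curvature hypothesis (rather than only Ricci) is invoked.
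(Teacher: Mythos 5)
Your plan has two linked errors that together make the argument prove the wrong inequality. First, the pointwise bound $2\log|f(y)|\geq 2m\log|z(y)|-C_{0}$ near $x$ (with $m=ord_{x}f$) is false in general: writing $f=P_{m}(z)+O(|z|^{m+1})$, the leading homogeneous part $P_{m}$ has zeros on the unit sphere unless $n=1$, so e.g.\ for $f=z_{1}^{m}$ in $\mathbb{C}^{n}$, $n\geq 2$, one has $\log|f|=-\infty$ on $\{z_{1}=0\}$ while $m\log|z|$ stays finite; only the \emph{upper} bound $\log|f(y)|\leq m\log|z(y)|+C$ holds. Second, and more seriously, the direction of the asymptotic you extract is the wrong one for the conversion step. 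Since $\log t<0$ as $t\to 0^{+}$, a lower bound $v(x,t)\geq C(n)\,m\log t-C_{1}$ says $v$ blows down \emph{no faster} than rate $C(n)m$; combined with the monotonicity of $\sigma w(x,\sigma)$ and $v(x,t)-v(x,s)=\int_{s}^{t}w\,d\sigma$, dividing by $\log t-\log s$ and letting $s\to 0^{+}$ yields $\lim_{t\to 0^{+}}tw(x,t)\leq C(n)m$, i.e.\ the reverse of (\ref{316}). To get the stated lower bound by your route you would need the \emph{upper} bound $v(x,t)\leq C(n)\,ord_{x}(f)\log t+O(1)$ as $t\to 0^{+}$ (which would indeed follow from the true upper bound on $\log|f|$, your moment estimate, and the fact that the region outside $B_{x}(\delta)$ contributes $O(1)$, since $\log|f|\leq\log^{+}|f|$ there); as written, your chain of inequalities does not give the lemma.

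For comparison, the paper argues directly on $w$ rather than on $v$: it writes $w(x,t)=2\int_{M}H(x,y,t)\Delta_{b}\log|f(y)|\,dy$, uses that $\Delta_{b}\log|f|\geq 0$ (so the tail can simply be dropped) together with the heat kernel \emph{lower} bound of Proposition \ref{p41} to get $w(x,t)\geq \frac{C}{V(B_{x}(\sqrt{t}))}\int_{B_{x}(\sqrt{t})}\Delta_{b}\log|f|$, and then, using $Tf=0$ and the Sasakian foliation, slices the ball along the Reeb direction and applies the transverse Lelong-number identity (\ref{323}) plus the Beta-function computation (\ref{343}) to obtain $w(x,t)\geq \frac{C}{t}\,ord_{x}(f)$; no asymptotics of $v$ and no Harnack-type conversion are needed there (monotonicity of $tw$ is only used to know the limit exists). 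Your approach can be repaired along the lines indicated above, but the repaired version must replace the false lower bound on $|f|$ by the upper bound and correspondingly establish the small-time upper asymptotic of $v$, not the lower one.
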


\begin{proof}
On account of the equation $\left(  \ref{319}\right)  $ and the initial
condition $\left(  \ref{320}\right)  $, we have%
\[
w\left(  x,t\right)  =2%
{\displaystyle \int \limits_{M}}
H(x,y,t)\Delta_{b}\log \left \vert f\left(  y\right)  \right \vert dy.
\]
by $\left(  \ref{350}\right)  $. So by the CR heat kernel estimate in
\cite{ccht}, we know%
\begin{equation}%
\begin{array}
[c]{ccl}%
w\left(  x,t\right)  & = & 2%
{\displaystyle \int \limits_{M}}
H(x,y,t)\Delta_{b}\log \left \vert f\left(  y\right)  \right \vert dy\\
& \geq & \frac{C}{V\left(  B_{x}\left(  \sqrt{t}\right)  \right)  }%
{\displaystyle \int \limits_{M}}
\exp(-C\frac{d\left(  x,y\right)  ^{2}}{t})\Delta_{b}\log \left \vert f\left(
y\right)  \right \vert dy\\
& \geq & \frac{C}{V\left(  B_{x}\left(  \sqrt{t}\right)  \right)  }%
{\displaystyle \int \limits_{\left[  0,\sqrt{t}\right]  }}
\exp \left(  -C\frac{r^{2}}{t}\right)  (%
{\displaystyle \int \limits_{\partial B_{x}\left(  r\right)  }}
\Delta_{b}\log \left \vert f\left(  s\right)  \right \vert d\sigma \left(
s\right)  )dr\\
& \geq & \frac{C}{V\left(  B_{x}\left(  \sqrt{t}\right)  \right)  }%
{\displaystyle \int \limits_{B_{x}\left(  \sqrt{t}\right)  }}
\Delta_{b}\log \left \vert f\left(  y\right)  \right \vert dy.
\end{array}
\label{322}%
\end{equation}

Furthermore, from the local property of the Sasakian manifolds and $Tf=0$, we
have, as $t<<1$,%
\begin{equation}%
\begin{array}
[c]{cl}
& \frac{C}{V\left(  B_{x}\left(  \sqrt{t}\right)  \right)  }%
{\displaystyle \int \limits_{B_{x}\left(  \sqrt{t}\right)  }}
\Delta_{b}\log \left \vert f\left(  y\right)  \right \vert dy\\
= & \frac{C}{V\left(  B_{x}\left(  \sqrt{t}\right)  \right)  }\int_{-\sqrt{t}%
}^{\sqrt{t}}\text{ }\int_{B_{\widetilde{x}}\left(  \sqrt{t-l^{2}}\right)
}\Delta_{\widetilde{y}}\log \left \vert f\left(  \left(  \widetilde{y},t\right)
\right)  \right \vert d\widetilde{y}dl\\
\geq & \frac{C}{t^{\frac{2n+1}{2}}}ord_{\widetilde{x}}\left(  \widetilde
{f}\right)  \int_{-\sqrt{t}}^{\sqrt{t}}\frac{V\left(  B_{\widetilde{x}}\left(
\sqrt{t-l^{2}}\right)  \right)  }{\left(  t-l^{2}\right)  }dl\text{ \ by
}\left(  \ref{323}\right) \\
= & \frac{C}{t^{\frac{2n+1}{2}}}ord_{x}\left(  f\right)  \int_{-\sqrt{t}%
}^{\sqrt{t}}\frac{V\left(  B_{\widetilde{x}}\left(  \sqrt{t-l^{2}}\right)
\right)  }{\left(  t-l^{2}\right)  }dl\\
\geq & \frac{C}{t^{\frac{2n+1}{2}}}ord_{x}\left(  f\right)  \int_{-\sqrt{t}%
}^{\sqrt{t}}\left(  t-l^{2}\right)  ^{n-1}dl\\
= & \frac{C}{t^{\frac{2n+1}{2}}}t^{n-\frac{1}{2}}ord_{x}\left(  f\right) \\
= & \frac{C}{t}ord_{x}\left(  f\right)  .
\end{array}
\label{342}%
\end{equation}

Here we have used the identity
\begin{equation}
ord_{\widetilde{x}}\left(  \widetilde{f}\right)  =\frac{1}{2n}\lim
_{r\rightarrow0^{+}}\frac{r^{2}}{V\left(  B_{\widetilde{x}}\left(  r\right)
\right)  }%
{\displaystyle \int \limits_{B_{\widetilde{x}}\left(  r\right)  }}
\widetilde{\Delta}\log \left \vert \widetilde{f}(\widetilde{y})\right \vert
d\widetilde{y} \label{323}%
\end{equation}
where $\widetilde{\Delta}$ denotes the Laplace operator on the slice
$\widetilde{D^{n}}$ with the trasversal K\"{a}hler structure and
$\widetilde{f}\left(  \widetilde{y}\right)  =f\left(  \widetilde{y},t\right)
$ for $t\ll1$. Also we adopt the adapted metric and
\[
ord_{x}\left(  f\right)  =ord_{\widetilde{x}}\left(  \widetilde{f}\right)  ,
\]
which comes from the fact $Z_{\alpha}f=\frac{\partial f}{\partial z^{\alpha}%
},\ f_{0}=0$ and utilize the equality%
\begin{equation}
\int_{-s}^{s}\left(  s^{2}-l^{2}\right)  ^{n-1}dl=s^{2n-1}\sqrt{\pi}%
\frac{\Gamma \left(  n\right)  }{\Gamma(n+\frac{1}{2})} \label{343}%
\end{equation}
in the third equality of $\left(  \ref{342}\right)  $ with $s=\sqrt{t}$ which
is derived as follows :%
\[%
\begin{array}
[c]{ccl}%
\int_{-s}^{s}\left(  s^{2}-l^{2}\right)  ^{n-1}dl & = & s^{2n-1}\int_{-1}%
^{1}\left(  1-x^{2}\right)  ^{n-1}dx\\
& = & 2s^{2n-1}\int_{0}^{1}\left(  1-x^{2}\right)  ^{n-1}dx\\
& = & s^{2n-1}\int_{0}^{1}\left(  1-z\right)  ^{n-1}\frac{1}{\sqrt{z}}dz\\
& = & s^{2n-1}B(\frac{1}{2},n)\\
& = & s^{2n-1}\frac{\Gamma \left(  \frac{1}{2}\right)  \Gamma \left(  n\right)
}{\Gamma \left(  n+\frac{1}{2}\right)  }\\
& = & s^{2n-1}\sqrt{\pi}\frac{\Gamma \left(  n\right)  }{\Gamma \left(
n+\frac{1}{2}\right)  }%
\end{array}
\]
where $B\left(  z,w\right)  $ and $\Gamma \left(  z\right)  $ denote the beta
function and the gamma function respectively.

Lemma \ref{l43} is accomplished.
\end{proof}

Note that, in the last proof we implicitly utilize the property of the
monotonicity of $tw\left(  x,t\right)  $ which would be explained in the one
of the next theorem. Prior to showing the sharp dimension estimate, we would
obtain the dimension estimates of the rough version as follows :

\begin{theorem}
Let $(M,J,\theta)$ be a complete noncompact pseudohermitian $(2n+1)$-manifold
of vanishing torsion with nonnegative pseudohermitian bisectional curvature.
Then there exists a positive constant $C\left(  n\right)  $ such that%
\begin{equation}
\dim_{%
\mathbb{C}
}\left(  \mathcal{O}_{d}^{CR}\left(  M\right)  \right)  \leq C\left(
n\right)  d^{n}. \label{315}%
\end{equation}

\end{theorem}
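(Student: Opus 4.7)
The plan is to combine the small-time lower bound from Lemma~\ref{l43}, the large-time upper bound from Lemma~\ref{l4.2}, a monotonicity of $tw(x,t)$ in $t$, and a transverse-jet counting argument at a fixed base point, following the CR analogue of the rough dimension count in Ni \cite{n}.

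First I would record the monotonicity: for $f\in\mathcal{O}_d^{CR}(M)$, $v(x,t)=2\int_M H(x,y,t)\log|f(y)|\,dy$, and $w(x,t)=\partial_t v(x,t)$, the function $t\mapsto tw(x,t)$ is nondecreasing on $(0,\infty)$. Because $Tf=f_0=0$, the function $f$ descends locally to a holomorphic function on the transverse K\"{a}hler slice $\widetilde{D^n}$, and the nonnegative pseudohermitian bisectional curvature translates into nonnegative holomorphic bisectional curvature of the transverse K\"{a}hler metric; Ni's monotonicity formula then applies transversely and gives the claim. This is precisely the property already invoked implicitly in the proof of Lemma~\ref{l43}.

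Next I would chain the inequalities. By monotonicity, $\lim_{t\to 0^+}tw(x,t)\leq \lim_{t\to\infty}tw(x,t)=:L(x,f)$, and integrating $w(x,s)\leq L/s$ from $1$ to $t$ yields $v(x,t)\leq L\log t+O(1)$; Lemma~\ref{l4.2} then forces $L(x,f)\leq d$, while Lemma~\ref{l43} supplies $C_1(n)\,\mathrm{ord}_x(f)\leq \lim_{t\to 0^+}tw(x,t)$. Combining gives
\begin{equation*}
\mathrm{ord}_x(f)\leq C_2(n)\,d
\end{equation*}
for every $x\in M$ and every $f\in\mathcal{O}_d^{CR}(M)$.

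Finally, a jet-counting argument at a fixed $x\in M$ closes the proof. The conditions $\overline{\partial}_b f=0$ and $Tf=0$ mean that in the transverse holomorphic chart $(z^1,\dots,z^n)$ around $x$, $f$ agrees with a genuine holomorphic function of $n$ complex variables, so $Z^\alpha f(x)$ equals the usual holomorphic partial derivative. Hence the jet map $J^m_x:\mathcal{O}_d^{CR}(M)\to\mathbb{C}^{N_m}$, $f\mapsto (Z^\alpha f(x))_{|\alpha|<m}$, with $N_m=\binom{m+n-1}{n}$, is well-defined and $\mathbb{C}$-linear. If a subspace $V\subset\mathcal{O}_d^{CR}(M)$ had $\dim_{\mathbb{C}}V>N_m$, then $\ker(J^m_x|_V)$ would contain a nonzero $f$ with $\mathrm{ord}_x(f)\geq m$; taking $m=\lfloor C_2(n)d\rfloor+1$ contradicts the order bound above, so $\dim_{\mathbb{C}}\mathcal{O}_d^{CR}(M)\leq N_m\leq C(n)d^n$. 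The main obstacle is the CR monotonicity of $tw(x,t)$ together with the identification $\lim_{t\to\infty}tw(x,t)\leq d$; once these are in place, leveraging the Sasakian hypothesis $Tf=0$ to borrow Ni's K\"{a}hler monotonicity transversely, the remaining counting step is essentially elementary.
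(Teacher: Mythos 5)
Your overall strategy --- monotonicity of $tw(x,t)$, the small-time limit from Lemma \ref{l43}, a large-time bound on $tw$, and a jet-counting argument at a fixed point --- is the same skeleton as the paper's, and your final counting step is fine. But the key middle step is argued in the wrong logical direction and, as written, fails. You set $L=\lim_{t\rightarrow\infty}tw(x,t)$, deduce $w(x,s)\leq L/s$ from monotonicity, integrate to get $v(x,t)\leq L\log t+O(1)$, and then claim that Lemma \ref{l4.2} ``forces $L\leq d$''. It does not: Lemma \ref{l4.2} is itself an upper bound, $\limsup_{t\rightarrow\infty}v(x,t)/\log t\leq d$, and the estimate $v(x,t)\leq L\log t+O(1)$ with a large $L$ is perfectly compatible with it --- two upper bounds cannot contradict one another. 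What is needed is a \emph{lower} bound on $v$ extracted from the monotonicity: for any fixed $t_{0}$ and $s\geq t_{0}$ one has $sw(x,s)\geq t_{0}w(x,t_{0})$, hence $w(x,s)\geq t_{0}w(x,t_{0})/s$ and $v(x,t)\geq v(x,t_{0})+t_{0}w(x,t_{0})\log(t/t_{0})$; dividing by $\log t$ and invoking Lemma \ref{l4.2} yields $t_{0}w(x,t_{0})\leq d$ for every $t_{0}$, and therefore $\lim_{t\rightarrow\infty}tw(x,t)\leq d$. This integration-from-below argument is exactly what the paper does (in the proof of Theorem \ref{TA}, with the scales $t^{1/k}$); the rough-estimate section of the paper instead bypasses Lemma \ref{l4.2} entirely by proving, from the CR heat kernel bounds and volume doubling, the direct growth estimate $v(x,t)\leq Cd\log t$ of (\ref{321}) and then arguing by contradiction against a hypothetical bound $tw>(C+\epsilon)d$. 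Either way, the one-line deduction you wrote must be replaced; once it is, your chain $C_{1}(n)\,\mathrm{ord}_{x}(f)\leq\lim_{t\rightarrow0^{+}}tw(x,t)\leq\lim_{t\rightarrow\infty}tw(x,t)\leq d$ is correct and the counting argument closes the proof as you describe.

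A second, smaller caveat: your justification of the monotonicity $\frac{\partial}{\partial t}\left(tw(x,t)\right)\geq0$ by ``borrowing Ni's K\"{a}hler monotonicity transversely'' is not a proof. The function $v$ solves the CR heat equation for $\Delta_{b}$ globally on $M$, whereas the transverse K\"{a}hler structure lives only on local leaf spaces which in general do not assemble into a global K\"{a}hler manifold, so Ni's theorem \cite{n} cannot simply be applied to a quotient. The paper obtains this monotonicity from a CR Li--Yau--Hamilton-type argument for the CR heat equation itself (following \cite{cf}), in which vanishing torsion and nonnegative pseudohermitian bisectional curvature enter directly; some such intrinsically CR argument is genuinely required at this point, beyond the local Sasakian picture and the condition $Tf=0$.
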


\begin{proof}
We adopt the notations in the proof of Lemma \ref{l43}. Set $w\left(
x,t\right)  =\frac{\partial}{\partial t}v\left(  x,t\right)  $. By the similar
deductions in \cite[Lemma 4.2.]{cf}, replaced the moment-type estimate in
\cite[(3.10) and (5.12)]{cf} \ by $\left(  \ref{314}\right)  $, we came out
with \cite[(5.14)]{cf} and then
\begin{equation}
\frac{\partial}{\partial t}\left(  tw\left(  x,t\right)  \right)  \geq0.
\label{324}%
\end{equation}

Note that the nonnegativity of the pseudohermitian bisectional curvature is
necessary here.

We first show that%
\begin{equation}
ord_{x}f\leq Cd. \label{318}%
\end{equation}

Basically, if we could prove%
\begin{equation}
tw\left(  x,t\right)  \leq Cd \label{317}%
\end{equation}
for any $t>1$, then we have $\left(  \ref{318}\right)  $ by Lemma \ref{l43}.
We observe that on account of the definition of $\mathcal{O}_{d}^{CR}\left(
M\right)  $ for all $x\in M$, there exists a constant $\widetilde
{C}=\widetilde{C}\left(  f,x,d,\epsilon \right)  >0$ such that%
\[
\log^{+}\left \vert f\left(  y\right)  \right \vert \leq \widetilde{C}+d\left(
\log \left(  1+d\left(  x,y\right)  \right)  \right)  .
\]

Accordingly we have, by the CR hea kernel estimate and the CR volume doubling
property,%
\begin{equation}%
\begin{array}
[c]{cl}
& v\left(  x,t\right) \\
\leq & \frac{C}{V\left(  B_{x}\left(  \sqrt{t}\right)  \right)  }\int
_{0}^{+\infty}[\exp \left(  -C\frac{r^{2}}{2t}\right)  (%
{\displaystyle \int \limits_{\partial B_{x}\left(  r\right)  }}
\log^{+}\left \vert f\left(  s\right)  \right \vert d\sigma \left(  s\right)
)]dr\\
\leq & \frac{C}{V\left(  B_{x}\left(  \sqrt{t}\right)  \right)  }\int
_{\sqrt{t}}^{+\infty}[\exp \left(  -C\frac{r^{2}}{2t}\right)  (%
{\displaystyle \int \limits_{\partial B_{x}\left(  r\right)  }}
\log^{+}\left \vert f\left(  s\right)  \right \vert d\sigma \left(  s\right)
)]dr+Cd\log \left(  1+t\right) \\
\leq & \frac{C}{V\left(  B_{x}\left(  \sqrt{t}\right)  \right)  }\int
_{\sqrt{t}}^{+\infty}[\exp \left(  -C\frac{r^{2}}{2t}\right)  \frac{Cr}{t}(%
{\displaystyle \int \limits_{B_{x}\left(  r\right)  }}
\log^{+}\left \vert f\left(  y\right)  \right \vert dy)]dr+Cd\log \left(
1+t\right) \\
\leq & C\int_{\sqrt{t}}^{+\infty}[(\frac{r}{\sqrt{t}})^{2C_{9}}\exp \left(
-C\frac{r^{2}}{2t}\right)  \frac{Cr}{t}d\log \left(  1+r\right)  ]dr+Cd\log
\left(  1+t\right) \\
\leq & C\int_{t}^{+\infty}[\frac{\tau^{C_{9}}}{t^{C_{9}+1}}\exp \left(
-C\frac{\tau}{2t}\right)  d\log \left(  1+\tau \right)  ]d\tau+Cd\log \left(
1+t\right) \\
\leq & C\int_{0}^{+\infty}\left[  s^{C_{9}}\exp \left(  -s\right)
sd\log \left(  1+t\right)  \right]  ds+Cd\log \left(  1+t\right) \\
\leq & Cd\log \left(  1+t\right)  +Cd\log \left(  1+t\right)  \Gamma \left(
C_{9}+2\right) \\
\leq & Cd\log t
\end{array}
\label{321}%
\end{equation}
for $t>1$. From now on, we fix the constant $C$ in $\left(  \ref{321}\right)
$. With the help of the inequality $\left(  \ref{321}\right)  $, we claim that
the inequality $\left(  \ref{317}\right)  $ holds; for if there's some small
positive constant $\epsilon$ and $t_{0}>1$ such that%
\[
tw\left(  x,t\right)  >\left(  C+\epsilon \right)  d
\]
for $t>t_{0}$. Here we have utilized the monotonicity of $tw\left(
x,t\right)  $. Therefore, by integrating both sides, we have%
\[
v\left(  x,t\right)  \geq \left(  C+\epsilon \right)  d\log t-A
\]
where the constant $A$ is independent of $t$. But this contradicts the
inequality $\left(  \ref{321}\right)  $. So we obtain $\left(  \ref{317}%
\right)  $. As a result, we deduce $\left(  \ref{318}\right)  $.

Now, other than fixing the constant $C$ chosen in $\left(  \ref{318}\right)
$, as well as in $\left(  \ref{321}\right)  $, we may assume such constant is
a positive integer. Next we want to settle the dimension estimate $\left(
\ref{315}\right)  $. In spite of the proof of $\left(  \ref{315}\right)  $ is
the same as the one in \cite{m1}, we would write it down for completeness. Let
$k\left(  n\right)  $ be a constant satisfying%
\[
q\left(  m\right)  =\left(
\begin{array}
[c]{c}%
n+m\\
n
\end{array}
\right)  <k\left(  n\right)  m^{n}.
\]
Consider the map%
\[%
\begin{array}
[c]{c}%
\Phi:\mathcal{O}_{d}^{CR}\left(  M\right)  \longrightarrow%
\mathbb{C}
^{q\left(  Cd\right)  }\\
\text{ \  \  \  \  \  \  \  \  \  \  \  \  \  \  \  \  \  \  \  \  \  \  \  \  \ }f\longmapsto \left(
D^{\alpha}f\right)  _{\left \vert \alpha \right \vert \leq Cd}.
\end{array}
\]
From $\left(  \ref{318}\right)  $, we see that $\Phi$ is injective. Suppose%
\[
\dim_{%
\mathbb{C}
}\left(  \mathcal{O}_{d}^{CR}\left(  M\right)  \right)  >C^{\prime}d^{n}%
\]
where $C^{\prime}$ is chosen with $C^{\prime}>k\left(  n\right)  C^{n}$, this
implies that%
\[
\dim_{%
\mathbb{C}
}\left(  \mathcal{O}_{d}^{CR}\left(  M\right)  \right)  >k\left(  n\right)
(Cd)^{n}>q\left(  Cd\right)  .
\]
However, this contradicts with the fact that $\Phi$ is injective. We complete
the proof.
\end{proof}

\section{Sharp Dimension Estimates}

Subsequently, we give another lemma also substantial to the proof of the sharp
dimension estimate. In contrast with Lemma \ref{l43}, one could find the
following lemma is the stronger version of that. In fact, that is why we could
derive the more advanced result-the sharp dimension estimate than the one in
the last section.

\begin{lemma}
\label{51S}Let $(M,J,\theta)$ be a complete noncompact pseudohermitian
$(2n+1)$-manifold of vanishing torsion with nonnegative pseudohermitian
bisectional curvature. Assume that $v\left(  x,t\right)  $ is a solution to
the CR heat equation%
\[
\left(  \frac{\partial}{\partial t}-\Delta_{b}\right)  v\left(  x,t\right)
=0
\]
on $M\times \left(  0,+\infty \right)  $ with the initial condition%
\[
v\left(  x,0\right)  =2\log \left \vert f\left(  x\right)  \right \vert
\]
for $f\in \mathcal{O}_{d}^{CR}\left(  M\right)  $, and
\[
w\left(  x,t\right)  =\frac{\partial}{\partial t}v\left(  x,t\right)  .
\]
Then%
\begin{equation}
\lim_{t\rightarrow0^{+}}tw\left(  x,t\right)  =ord_{x}f. \label{328}%
\end{equation}

\end{lemma}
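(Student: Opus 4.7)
The plan is to sharpen Lemma~\ref{l43}, replacing its dimensional constant $C(n)$ with $1$ and adding a matching upper bound, and then invoke the monotonicity $(\partial/\partial t)(tw(x,t))\ge 0$ from $(\ref{324})$. As in Lemma~\ref{l43}, I begin with
\begin{equation*}
w(x,t)=2\int_{M}H(x,y,t)\,\Delta_{b}\log|f(y)|\,dy,
\end{equation*}
which makes sense distributionally: since $\overline{\partial}_{b}f=0$ and $Tf=0$, the function $\log|f|$ is transversally plurisubharmonic and $\Delta_{b}\log|f|$ is a nonnegative Radon measure whose singular part is supported on the transverse zero divisor of $f$. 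The monotonicity already guarantees that the limit in $(\ref{328})$ exists.

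The core step is a short-time reduction to the Heisenberg model $\mathbf{H}_{n}$. In an adapted Sasakian chart $(x_{0},z^{1},\ldots,z^{n})$ at $x$ with $T=\partial/\partial x_{0}$, the vanishing torsion forces the pseudohermitian structure to be a higher-order perturbation of the flat Heisenberg structure. I would apply the parabolic dilation $\delta_{\sqrt{t}}:(z,x_{0})\mapsto(\sqrt{t}\,z,\,t\,x_{0})$ on $\mathbf{H}_{n}$, under which $\Delta_{b}$ scales by $t^{-1}$, the CC-volume of radius-$\sqrt{t}$ balls by $t^{n+1}$, and the Heisenberg heat kernel by $H_{0}(0,\delta_{\sqrt{t}}u,t)=t^{-(n+1)}H_{0}(0,u,1)$. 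Writing $f=p_{k}+O(|z|^{k+1})$ near $x$, where $p_{k}$ is the transversally homogeneous leading polynomial of degree $k=ord_{x}f$, both the lower-order remainder in $f$ and the deviation $H-H_{0}$ contribute $o(t^{-1})$ after pairing against the heat kernel, giving
\begin{equation*}
tw(x,t)=2\int_{\mathbf{H}_{n}}H_{0}(0,u,1)\,\Delta_{b}\log|p_{k}(u)|\,du+o(1)\quad\text{as }t\to 0^{+}.
\end{equation*}
The leading integral is scale-invariant. Integrating out the Reeb direction collapses the marginal of $H_{0}$ to the Euclidean Gaussian $(4\pi)^{-n}e^{-|z|^{2}/4}$ on the transverse $\mathbb{C}^{n}$, and the radial identity $\int_{B(r)}\Delta\log|p_{k}|\,dV=k\,|S^{2n-1}|\,r^{2n-2}$ (valid for any homogeneous polynomial of degree $k$, by the spherical-mean formula for subharmonic functions) evaluates the integral to $k/2$; the prefactor $2$ supplies exactly $k=ord_{x}f$, consistent also with the normalization in $(\ref{323})$.

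The main obstacle is justifying the short-time replacement $H(x,y,t)\approx H_{0}$ uniformly on the near-diagonal scale $d_{cc}(x,y)\lesssim\sqrt{t}$, given that $\Delta_{b}\log|f|$ is a singular measure with support on the transverse zero divisor of $f$. The two-sided Gaussian bounds of Proposition~\ref{p41} control the tail contribution from $d_{cc}(x,y)\gtrsim R\sqrt{t}$ in the same manner as the shell splitting used in Lemma~\ref{l31}, but the sharp leading-order asymptotic near the diagonal requires a short-time heat kernel expansion for the sub-Laplacian. Such an expansion is available because the vanishing torsion makes the tangent cone of $(M,J,\theta)$ at every point precisely $\mathbf{H}_{n}$, so standard parametrix methods for $\Delta_{b}$ furnish it. The Poincar\'{e}--Lelong identity $(\ref{323})$ from Lemma~\ref{l43} then controls the pairing against the singular part of $\Delta_{b}\log|f|$, so that the $o(1)$ remainder is genuine; combined with the monotonicity of $tw(x,t)$ this yields $(\ref{328})$.
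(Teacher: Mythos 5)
Your overall architecture (monotonicity of $tw$ from (\ref{324}), a near/far splitting of the integral $2t\int_M H\,\Delta_b\log|f|$, and an evaluation of the near part through the transverse K\"ahler Lelong-number identity) parallels the paper, and your constant bookkeeping is consistent with it: the radial identity you quote is exactly (\ref{323}), and the transverse Gaussian integral giving $k/2$ matches the paper's Beta/Gamma evaluation in (\ref{341}). The genuine gap is the step you yourself call the main obstacle and then dispatch with ``standard parametrix methods'': the claim that at the parabolic scale $d_{cc}(x,y)\lesssim\sqrt t$ the kernel $H(x,y,t)$ may be replaced by the Heisenberg model kernel $H_0$ with an error $o(t^{-(n+1)})$, uniformly enough to be paired against the singular measure $\Delta_b\log|f|$ (whose mass in $B_x(\sqrt t)$ is of order $t^{n}$ by (\ref{326})), so that the deviation contributes $o(1)$ to $tw(x,t)$. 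Nothing available in the paper's toolkit --- Proposition \ref{p41} gives only two-sided Gaussian bounds with unspecified constants, plus volume doubling --- produces such a sharp near-diagonal asymptotic for the sub-Riemannian heat kernel, and the exactness of the limit (\ref{328}) rests entirely on it. A second unproved point is the replacement of $\Delta_b\log|f|$ by $\Delta_b\log|p_k|$ under parabolic rescaling: since $\log|f|-\log|p_k|$ is not small near the zero set of $p_k$, you need (for instance) $L^1_{loc}$ convergence of the rescaled potentials and the resulting weak convergence of the Laplacians, which you assert implicitly but do not supply.

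The idea the paper uses to avoid precisely this difficulty is worth noting, because it is the missing ingredient: since $Tf=0$ and the torsion vanishes, $\Delta_b\log|f|$ is invariant along the Reeb flow, so by $\Delta_\epsilon=2\Delta_b+\epsilon^2T^2$ and a uniqueness theorem for the heat equation the sub-Laplacian semigroup applied to this datum coincides (after rescaling time) with the semigroup of the adapted Riemannian metric $h_\epsilon$; one then only needs the classical Riemannian small-time asymptotic $H_\epsilon(x,y,t)\sim(4\pi t)^{-\frac{2n+1}{2}}e^{-d^2/4t}$ as in (\ref{335}), and the explicit slice integration in the Reeb direction via (\ref{343}) delivers the exact constant. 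If you wish to keep your blow-up route, you must prove (or import with a precise statement and verify its applicability) a uniform Heisenberg-model approximation of $H$ at scale $\sqrt t$ with quantitative off-diagonal control, together with the weak convergence of the rescaled divisor measures; as written, your central limit formula is asserted rather than established.
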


\begin{proof}
From the definition of $w\left(  x,t\right)  $, we have
\begin{equation}
w\left(  x,t\right)  =2%
{\displaystyle \int \limits_{M}}
H(x,y,t)\Delta_{b}\log \left \vert f\left(  y\right)  \right \vert dy.
\label{327}%
\end{equation}

By the CR heat kernel estimate, the same computations of $\left(
\ref{322}\right)  $ in Lemma \ref{l43} give us the inequality
\begin{equation}
\frac{1}{V\left(  B_{x}\left(  r\right)  \right)  }%
{\displaystyle \int \limits_{B_{x}\left(  r\right)  }}
\Delta_{b}\log \left \vert f\left(  y\right)  \right \vert dy\leq Cw\left(
x,r^{2}\right)  . \label{325}%
\end{equation}

Combing $\left(  \ref{325}\right)  $ and $\left(  \ref{317}\right)  $, we
obtain%
\begin{equation}
\frac{1}{V\left(  B_{x}\left(  r\right)  \right)  }%
{\displaystyle \int \limits_{B_{x}\left(  r\right)  }}
\Delta_{b}\log \left \vert f\left(  y\right)  \right \vert dy\leq C\frac{d}%
{r^{2}} \label{326}%
\end{equation}
for any $r>0$ by $\left(  \ref{324}\right)  $. By the equality%
\[
ord_{\widetilde{x}}\left(  \widetilde{f}\right)  =\frac{1}{2n}\lim
_{r\rightarrow0^{+}}\frac{r^{2}}{V\left(  B_{\widetilde{x}}\left(  r\right)
\right)  }%
{\displaystyle \int \limits_{B_{\widetilde{x}}\left(  r\right)  }}
\widetilde{\Delta}\log \left \vert \widetilde{f}\right \vert ,
\]
we know that for any $\epsilon>0,$ there is $\delta>0$ such that%
\begin{equation}
\left \vert 2nord_{\widetilde{x}}\left(  \widetilde{f}\right)  -\frac{r^{2}%
}{V\left(  B_{\widetilde{x}}\left(  r\right)  \right)  }%
{\displaystyle \int \limits_{B_{\widetilde{x}}\left(  r\right)  }}
\widetilde{\Delta}\log \left \vert \widetilde{f}\right \vert \right \vert
<\frac{\epsilon}{6} \label{336}%
\end{equation}
for $0<r<\delta<1$. Now we separate the integration in $\left(  \ref{327}%
\right)  $ into two parts:%
\begin{equation}%
\begin{array}
[c]{cl}
& tw\left(  x,t\right) \\
= & 2t%
{\displaystyle \int \limits_{M}}
H(x,y,t)\Delta_{b}\log \left \vert f\left(  y\right)  \right \vert dy\\
= & 2t[%
{\displaystyle \int \limits_{B_{x}\left(  \delta \right)  ^{c}}}
H(x,y,t)\Delta_{b}\log \left \vert f\left(  y\right)  \right \vert dy+%
{\displaystyle \int \limits_{B_{x}\left(  \delta \right)  }}
H(x,y,t)\Delta_{b}\log \left \vert f\left(  y\right)  \right \vert dy].
\end{array}
\label{329}%
\end{equation}

So if we could show that (I) the first integral is close to zero and (II) the
second one is close to $ord_{x}\left(  f\right)  $ as $t$ $\rightarrow0^{+}$
in the last line of $\left(  \ref{329}\right)  $, then $\left(  \ref{328}%
\right)  $ holds.

(I). Now we're going to show the first integral in $\left(  \ref{329}\right)
$ goes to zero for sufficiently small $t$ as follows: \ By CR heat kernel
estimate, we have, for $t\leq \delta^{2}$,%
\begin{equation}%
\begin{array}
[c]{cl}
& 2t%
{\displaystyle \int \limits_{B_{x}\left(  \delta \right)  ^{c}}}
H(x,y,t)\Delta_{b}\log \left \vert f\left(  y\right)  \right \vert dy\\
\leq & 2t\frac{C}{V\left(  B_{x}\left(  \sqrt{t}\right)  \right)  }%
\int_{\delta}^{+\infty}\exp(-C\frac{r^{2}}{2t})(%
{\displaystyle \int \limits_{\partial B_{x}\left(  r\right)  }}
\Delta_{b}\log \left \vert f\left(  s\right)  \right \vert d\sigma \left(
s\right)  )dr\\
\leq & -\frac{Ct}{V\left(  B_{x}\left(  \sqrt{t}\right)  \right)  }%
\exp(-C\frac{\delta^{2}}{2t})%
{\displaystyle \int \limits_{B_{x}\left(  \delta \right)  }}
\Delta_{b}\log \left \vert f\left(  y\right)  \right \vert dy\\
& +Ct\int_{\delta}^{+\infty}[\exp(-C\frac{r^{2}}{2t})\frac{r}{t}(\frac
{r}{\sqrt{t}})^{2C}(\frac{1}{V\left(  B_{x}\left(  r\right)  \right)  }%
{\displaystyle \int \limits_{B_{x}\left(  r\right)  }}
\Delta_{b}\log \left \vert f\left(  y\right)  \right \vert dy)]dr.
\end{array}
\label{330}%
\end{equation}
Here we use integration by part, $\left(  \ref{326}\right)  $ and the CR
volume doubling property in the second inequality. From the CR volume doubling
property and the inequality $\left(  \ref{326}\right)  $, we obtain%
\begin{equation}
\lim_{t\rightarrow0^{+}}\frac{C}{V\left(  B_{x}\left(  \sqrt{t}\right)
\right)  }\exp(-C\frac{\delta^{2}}{2t})%
{\displaystyle \int \limits_{B_{x}\left(  \delta \right)  }}
\Delta_{b}\log \left \vert f\left(  y\right)  \right \vert dy=0. \label{331}%
\end{equation}

Set $\tau=\frac{r^{2}}{2t}$ in the second integral in the right hand side in
$\left(  \ref{330}\right)  $, we have%
\begin{equation}%
\begin{array}
[c]{cl}
& C\int_{\delta}^{+\infty}[\exp(-C\frac{r^{2}}{2t})r(\frac{r}{\sqrt{t}%
})^{2C_{9}}(\frac{1}{V\left(  B_{x}\left(  r\right)  \right)  }%
{\displaystyle \int \limits_{B_{x}\left(  r\right)  }}
\Delta_{b}\log \left \vert f\left(  y\right)  \right \vert dy)]dr\\
\leq & C\int_{\frac{\delta^{2}}{2t}}^{+\infty}[\exp \left(  -C\tau \right)
\left(  \tau t\right)  \left(  2\tau \right)  ^{C_{9}-1}(\frac{1}{V\left(
B_{x}\left(  \sqrt{2\tau t}\right)  \right)  }%
{\displaystyle \int \limits_{B_{x}\left(  \sqrt{2\tau t}\right)  }}
\Delta_{b}\log \left \vert f\left(  y\right)  \right \vert dy)]d\tau \\
\leq & Cd\int_{\frac{\delta^{2}}{2t}}^{+\infty}\exp \left(  -C\tau \right)
\left(  2\tau \right)  ^{C_{9}-1}d\tau \\
\rightarrow & 0^{+}\text{ \ }as\text{ }t\rightarrow0^{+}%
\end{array}
\text{ } \label{332}%
\end{equation}
by $\left(  \ref{326}\right)  $. From $\left(  \ref{330}\right)  $, $\left(
\ref{331}\right)  $ and $\left(  \ref{332}\right)  $, we see that the first
integration in the last line of $\left(  \ref{329}\right)  $ is close to zero,
as $t$ goes to zero.

(II). Next we show the following two inequalities from which the fact that the
second integral in the last line of $\left(  \ref{329}\right)  $ approximates
to $ord_{x}\left(  f\right)  $ as $t$ small enough, could be deduced :%
\begin{equation}
ord_{x}\left(  f\right)  -\epsilon<\lim \inf_{t\rightarrow0^{+}}(2t%
{\displaystyle \int \limits_{B_{x}\left(  \delta \right)  }}
H(x,y,t)\Delta_{b}\log \left \vert f\left(  y\right)  \right \vert dy)
\label{333}%
\end{equation}
and%
\begin{equation}
\lim \sup_{t\rightarrow0^{+}}(2t%
{\displaystyle \int \limits_{B_{x}\left(  \delta \right)  }}
H(x,y,t)\Delta_{b}\log \left \vert f\left(  y\right)  \right \vert dy)<ord_{x}%
\left(  f\right)  +\epsilon. \label{334}%
\end{equation}

Now we prove the inequality $\left(  \ref{333}\right)  $. As for $\left(
\ref{334}\right)  $, the deduction is almost the same as well as $\left(
\ref{333}\right)  $.

In the rest of the proof, we adopt the adapted metric as in (\ref{2017}). From
the small time asymptotic expansion for the heat kernel $H_{\epsilon}\left(
x,y,t\right)  $ with respect to the adapted metric,
\[
H_{\epsilon}\left(  x,y,t\right)  \sim \frac{1}{\left(  4\pi t\right)
^{\frac{2n+1}{2}}}\exp(-\frac{d^{2}\left(  x,y\right)  }{4t})+L.O.T.
\]
if $t$ is sufficiently small, we have, for $t<<1$,%
\begin{equation}%
\begin{array}
[c]{cl}
& 2t%
{\displaystyle \int \limits_{B_{x}\left(  \delta \right)  }}
H(x,y,t)\Delta_{b}\log \left \vert f\left(  y\right)  \right \vert dy\\
= & t%
{\displaystyle \int \limits_{B_{x}\left(  \delta \right)  }}
H_{\epsilon}(x,y,\frac{t}{2})\Delta_{\epsilon}\log \left \vert f\left(
y\right)  \right \vert d\mu_{\epsilon}\left(  y\right) \\
\geq & t\int_{0}^{\delta}[\frac{1}{\left(  2\pi t\right)  ^{\frac{2n+1}{2}}%
}\exp \left(  -\frac{s^{2}}{2t}\right)  (%
{\displaystyle \int \limits_{\partial B_{x}\left(  s\right)  }}
\Delta_{\epsilon}\log \left \vert f\left(  y\right)  \right \vert d\sigma
_{\epsilon}\left(  y\right)  )]ds-\frac{\epsilon}{2}\\
= & t\frac{1}{\left(  2\pi t\right)  ^{\frac{2n+1}{2}}}\exp \left(
-\frac{\delta^{2}}{2t}\right)  (%
{\displaystyle \int \limits_{B_{x}\left(  \delta \right)  }}
\Delta_{\epsilon}\log \left \vert f\left(  y\right)  \right \vert d\mu_{\epsilon
}\left(  y\right)  )\\
& +t\int_{0}^{\delta}[\frac{1}{\left(  2\pi t\right)  ^{\frac{2n+1}{2}}}%
\exp \left(  -\frac{s^{2}}{2t}\right)  \frac{s}{t}(%
{\displaystyle \int \limits_{B_{x}\left(  s\right)  }}
\Delta_{\epsilon}\log \left \vert f\left(  y\right)  \right \vert d\mu_{\epsilon
}\left(  y\right)  )]ds-\frac{\epsilon}{2}\\
= & t\frac{1}{\left(  2\pi t\right)  ^{\frac{2n+1}{2}}}\exp \left(
-\frac{\delta^{2}}{2t}\right)  (%
{\displaystyle \int \limits_{B_{x}\left(  \delta \right)  }}
\Delta_{b}\log \left \vert f\left(  y\right)  \right \vert d\mu \left(  y\right)
)\\
& +t\int_{0}^{\delta}[\frac{1}{\left(  2\pi t\right)  ^{\frac{2n+1}{2}}}%
\exp \left(  -\frac{s^{2}}{2t}\right)  \frac{s}{t}(%
{\displaystyle \int \limits_{B_{x}\left(  s\right)  }}
\Delta_{b}\log \left \vert f\left(  y\right)  \right \vert d\mu \left(  y\right)
)]ds-\frac{\epsilon}{2}.
\end{array}
\label{335}%
\end{equation}
Here we use the fact that $f_{0}=0$ and use integration by part, $\left(
\ref{326}\right)  $ and%
\[
\left \{
\begin{array}
[c]{l}%
\Delta_{\epsilon}\log \left \vert f\left(  y\right)  \right \vert =2\Delta
_{b}\log \left \vert f\left(  y\right)  \right \vert ,\\
d\mu_{\epsilon}\left(  y\right)  =\frac{1}{2}d\mu.
\end{array}
\right.  .
\]
More precisely, we explain why the first equality in $\left(  \ref{335}%
\right)  $ holds as follows : \ On $B_{x}\left(  \delta \right)  $, the
equation
\begin{equation}
\left \{
\begin{array}
[c]{l}%
\left(  \frac{\partial}{\partial t}-\Delta_{b}\right)  w\left(  x,t\right)
=0\\
w\left(  x,0\right)  =2\Delta_{b}\log \left \vert f\left(  x\right)
\right \vert
\end{array}
\right.  \label{344}%
\end{equation}
admits the solution%
\[
w\left(  x,t\right)  =2%
{\displaystyle \int \limits_{B_{x}\left(  \delta \right)  }}
H(x,y,t)\Delta_{b}\log \left \vert f\left(  y\right)  \right \vert dy.
\]

Differentiating the equation $\left(  \ref{344}\right)  $ along the direction
$T$, we have, by $f_{0}=0$,
\[
\left \{
\begin{array}
[c]{l}%
\left(  \frac{\partial}{\partial t}-\Delta_{b}\right)  w_{0}\left(
x,t\right)  =0,\\
w_{0}\left(  x,0\right)  =0.
\end{array}
\right.
\]

By Theorem \ref{T31}, we know that%
\[
w_{0}\left(  x,t\right)  =0.
\]

On the other hand, it follows from the equality $\Delta_{\epsilon}=2\Delta
_{b}+\epsilon^{2}T^{2}$ that the equation $\left(  \ref{344}\right)  $ becomes%
\[
\left \{
\begin{array}
[c]{l}%
\left(  \frac{\partial}{\partial t}-\frac{1}{2}\Delta_{\epsilon}\right)
w\left(  x,t\right)  =0,\\
w\left(  x,0\right)  =2\Delta_{b}\log \left \vert f\left(  x\right)  \right \vert
.
\end{array}
\right.  .
\]

It could ne written in this form%
\[
\left \{
\begin{array}
[c]{l}%
\left(  \frac{\partial}{\partial \widetilde{t}}-\Delta_{\epsilon}\right)
\widetilde{w}\left(  x,t\right)  =0,\\
\widetilde{w}\left(  x,0\right)  =\Delta_{\epsilon}\log \left \vert f\left(
x\right)  \right \vert .
\end{array}
\right.
\]

where $\widetilde{w}\left(  x,t\right)  =w\left(  x,2t\right)  $. From the
result in \cite{do}, we obtain that such equation admits a unique solution%
\[
\widetilde{w}\left(  x,t\right)  =%
{\displaystyle \int \limits_{B_{x}\left(  \delta \right)  }}
H_{\epsilon}(x,y,t)\Delta_{\epsilon}\log \left \vert f\left(  y\right)
\right \vert d\mu_{\epsilon}\left(  y\right)  ,
\]

that is,%
\[
w\left(  x,2t\right)  =%
{\displaystyle \int \limits_{B_{x}\left(  \delta \right)  }}
H_{\epsilon}(x,y,t)\Delta_{\epsilon}\log \left \vert f\left(  y\right)
\right \vert d\mu_{\epsilon}\left(  y\right)  .
\]

By the uniqueness, we have%
\[
2%
{\displaystyle \int \limits_{B_{x}\left(  \delta \right)  }}
H(x,y,t)\Delta_{b}\log \left \vert f\left(  y\right)  \right \vert dy=%
{\displaystyle \int \limits_{B_{x}\left(  \delta \right)  }}
H_{\epsilon}(x,y,\frac{t}{2})\Delta_{\epsilon}\log \left \vert f\left(
y\right)  \right \vert d\mu_{\epsilon}\left(  y\right)  .
\]

By the inequality $\left(  \ref{326}\right)  $ again, we also have%
\[%
\begin{array}
[c]{cl}
& \underset{t\rightarrow0^{+}}{\lim}t\frac{1}{\left(  2\pi t\right)
^{\frac{2n+1}{2}}}\exp(-\frac{\delta^{2}}{2t})(%
{\displaystyle \int \limits_{B_{x}\left(  \delta \right)  }}
\Delta_{b}\log \left \vert f\left(  y\right)  \right \vert d\mu \left(  y\right)
)\\
\leq & \underset{t\rightarrow0^{+}}{\lim}t\frac{1}{\left(  2\pi t\right)
^{\frac{2n+1}{2}}}\exp(-\frac{\delta^{2}}{2t})V\left(  B_{x}\left(
\delta \right)  \right)  C\frac{d}{\delta^{2}}\\
= & 0.
\end{array}
\]

From the result above, we simplify the inequality $\left(  \ref{335}\right)  $
into
\[%
\begin{array}
[c]{cl}
& t%
{\displaystyle \int \limits_{B_{x}\left(  \delta \right)  }}
H(x,y,t)\Delta_{b}\log \left \vert f\left(  y\right)  \right \vert dy\\
\geq & t\int_{0}^{\delta}\frac{1}{\left(  2\pi t\right)  ^{\frac{2n+1}{2}}%
}\exp(-\frac{s^{2}}{2t})\frac{s}{t}(%
{\displaystyle \int \limits_{B_{x}\left(  s\right)  }}
\Delta_{b}\log \left \vert f\left(  y\right)  \right \vert dy)ds-\frac{3\epsilon
}{4}%
\end{array}
\]
for $t<<1$. Hence we just need to claim that%
\begin{equation}
t\int_{0}^{\delta}[\frac{1}{\left(  2\pi t\right)  ^{\frac{2n+1}{2}}}%
\exp(-\frac{s^{2}}{2t})\frac{s}{t}(%
{\displaystyle \int \limits_{B_{x}\left(  s\right)  }}
\Delta_{b}\log \left \vert f\left(  y\right)  \right \vert dy)]ds>ord_{x}%
f-\frac{\epsilon}{4} \label{337}%
\end{equation}
for $t<<1$. By the hypotheses, we see that%
\begin{equation}
V\left(  B_{\widetilde{x}}\left(  r\right)  \right)  \sim \omega_{2n}r^{2n}
\label{338}%
\end{equation}
as $r\rightarrow0^{+}$, where $\omega_{k}$ is the volume of the unit ball in $%
\mathbb{R}
^{k}$ for all $k\in%
\mathbb{N}
$. Utilizing this volume approximation and $\left(  \ref{336}\right)  $, we
obtain, for $t<<1$,
\begin{equation}%
\begin{array}
[c]{cl}
& t\int_{0}^{\delta}[\frac{1}{\left(  2\pi t\right)  ^{\frac{2n+1}{2}}}%
\exp(-\frac{s^{2}}{2t})\frac{s}{t}(%
{\displaystyle \int \limits_{B_{x}\left(  s\right)  }}
\Delta_{b}\log \left \vert f\left(  y\right)  \right \vert dy)]ds\\
= & t\int_{0}^{\delta}\frac{1}{\left(  2\pi t\right)  ^{\frac{2n+1}{2}}}%
\exp(-\frac{s^{2}}{2t})\frac{s}{t}[\int_{-s}^{s}(%
{\displaystyle \int \limits_{B_{\widetilde{x}}\left(  \sqrt{s^{2}-l^{2}}\right)
}}
\Delta_{\widetilde{y}}\log \left \vert f\left(  \left(  \widetilde{y},t\right)
\right)  \right \vert d\widetilde{y})dl]ds\\
> & \int_{0}^{\delta}[\frac{1}{\left(  2\pi t\right)  ^{\frac{2n+1}{2}}}%
\exp(-\frac{s^{2}}{2t})2ns\left(  ord_{x}\left(  f\right)  \right)
\omega_{2n}(\int_{-s}^{s}\left(  s^{2}-l^{2}\right)  ^{n-1}dl)]ds-\frac
{\epsilon}{6}\\
= & \int_{0}^{\delta}[\frac{1}{\left(  2\pi t\right)  ^{\frac{2n+1}{2}}}%
\exp(-\frac{s^{2}}{2t})2ns\left(  ord_{x}\left(  f\right)  \right)
\omega_{2n}(s^{2n-1}\sqrt{\pi}\frac{\Gamma \left(  n\right)  }{\Gamma \left(
n+\frac{1}{2}\right)  })]ds-\frac{\epsilon}{6}\\
= & \frac{2nt}{\left(  2\pi t\right)  ^{\frac{2n+1}{2}}}ord_{x}\left(
f\right)  \omega_{2n}\sqrt{\pi}\frac{\Gamma \left(  n\right)  }{\Gamma \left(
n+\frac{1}{2}\right)  }(\int_{0}^{\frac{\delta^{2}}{2t}}\exp \left(
-\tau \right)  \left(  2\tau t\right)  ^{\frac{2n-1}{2}}d\tau)-\frac{\epsilon
}{6}\\
= & \frac{n\omega_{2n}ord_{x}\left(  f\right)  }{\pi^{n}}\frac{\Gamma \left(
n\right)  }{\Gamma \left(  n+\frac{1}{2}\right)  }\int_{0}^{\frac{\delta^{2}%
}{2t}}\exp \left(  -\tau \right)  \tau^{\frac{2n-1}{2}}d\tau-\frac{\epsilon}%
{6}\\
> & ord_{x}\left(  f\right)  -\frac{\epsilon}{4}%
\end{array}
\label{341}%
\end{equation}
where we use the error term $\frac{\epsilon}{12}$ twice in the third line,
that are $\left(  \ref{336}\right)  $ and $\left(  \ref{338}\right)  $, and
the fact that%
\[
\int_{0}^{\frac{\delta^{2}}{2t}}\exp \left(  -\tau \right)  \tau^{\frac{2n-1}%
{2}}d\tau \rightarrow \Gamma(n+\frac{1}{2})
\]
as $t$ goes to zero, in the last inequality. For the second equality of
$\left(  \ref{341}\right)  $, we use the equality $\left(  \ref{343}\right)
$. This completes the proof.
\end{proof}

Finally, we would show the main theorem-the sharp dimension estimate in this
paper as follows.

\begin{proof}
of \textbf{Theorem \ref{TA} \ }

Here we follow the ideas and arguments in \cite{n} and \cite{cfyz} to derive
this result and use the notations of the hypotheses in the preceding lemma.

From%
\[
\frac{\partial}{\partial t}\left(  tw\left(  x,t\right)  \right)  \geq0,
\]
we have, for the positive integer $k\geq2$,%
\[%
\begin{array}
[c]{ccl}%
v\left(  x,t\right)  & = & (%
{\displaystyle \int \limits_{\left(  1,t\right)  }}
w\left(  x,s\right)  ds)+v\left(  x,1\right) \\
& \geq & (%
{\displaystyle \int \limits_{\left(  t^{\frac{1}{k}},t\right)  }}
\frac{t^{\frac{1}{k}}}{s}w\left(  x,t^{\frac{1}{k}}\right)  ds)+v\left(
x,1\right) \\
& = & t^{\frac{1}{k}}w(x,t^{\frac{1}{k}})(1-\frac{1}{k})\log t+v\left(
x,1\right)  .
\end{array}
\]
Then, by Lemma \ref{l4.2}, we obtain%
\[
\underset{t\rightarrow+\infty}{\lim \sup}[t^{\frac{1}{k}}w(x,t^{\frac{1}{k}%
})(1-\frac{1}{k})]\leq d.
\]

On account of the arbitrariness of $k$, we have%
\[
\underset{t\rightarrow+\infty}{\lim \sup}tw\left(  x,t\right)  \leq d.
\]

By the monotonicity of $tw\left(  x,t\right)  $ and Lemma \ref{51S}, we get%
\begin{equation}
ord_{x}f\leq d \label{340}%
\end{equation}
for all nontrivial $f\in \mathcal{O}_{d}^{CR}\left(  M\right)  $. Now we
perform the Poincar\'{e}-Siegel argument and note that the argument only
depends on the local property of the CR holomorphic functions of polynomial
growth. The Poincar\'{e}-Siegel map%
\[%
\begin{array}
[c]{c}%
\Phi:\mathcal{O}_{d}^{CR}\left(  M\right)  \longrightarrow%
\mathbb{C}
^{q\left(  d\right)  }\\
\text{ \  \  \  \  \  \  \  \  \  \  \  \  \  \  \  \  \  \  \  \  \  \  \  \  \ }f\longmapsto \left(
D^{\alpha}f\right)  _{\left \vert \alpha \right \vert \leq d},
\end{array}
\]
where $q\left(  d\right)  =\left(
\begin{array}
[c]{c}%
n+d\\
n
\end{array}
\right)  $ is the complex dimension of $\mathcal{O}_{d}\left(
\mathbb{C}
^{n}\right)  $. So if $f\neq0$ but $D^{\alpha}f=0$ for all $\alpha$ with
$\left \vert \alpha \right \vert \leq d$, then it contradicts with $\left(
\ref{340}\right)  $. Hence, we know that the Poincar\'{e}-Siegel map $\Phi$ is
injective and this implies that%
\[
\dim_{%
\mathbb{C}
}\left(  \mathcal{O}_{d}^{CR}\left(  M\right)  \right)  \leq \dim_{%
\mathbb{C}
}\mathcal{O}_{d}\left(
\mathbb{C}
^{n}\right)  =\dim_{%
\mathbb{C}
}\left(  \mathcal{O}_{d}^{CR}\left(  H^{n}\right)  \right)  .
\]

Therefore we complete the proof.
\end{proof}

\appendix

\section{}

In this appendix, we would show the CR $L^{p}$ submean value inequality.
Although the details are almost the same as in \cite{sc2}, we give its proof
here for completeness.

\begin{proof}
of \textbf{Proposition \ref{P31}} : Initially we prove the case $p=2$ and may
assume $\tau=1=\delta^{\prime}$. On account of
\[%
{\displaystyle \int \limits_{B_{x}(r)}}
\langle(\frac{\partial}{\partial t}-\triangle_{b})u,\phi \rangle d\mu \leq0
\]
for any $\phi \in C_{c}^{\infty}(B_{x}(r))$, choosing $\phi=\psi^{2}u$ for
$\psi \in C_{c}^{\infty}(B_{x}(r))$, we have
\[%
\begin{array}
[c]{ccl}%
{\displaystyle \int \limits_{B_{x}(r)}}
[\psi^{2}u\frac{\partial u}{\partial t}+\psi^{2}|\nabla_{b}u|^{2}]d\mu & \leq
& 2|%
{\displaystyle \int \limits_{B_{x}(r)}}
u\psi \langle \nabla_{b}u,\nabla_{b}\psi \rangle d\mu|\\
& \leq & 2\
{\displaystyle \int \limits_{B_{x}(r)}}
|\nabla_{b}\psi|^{2}u^{2}d\mu+\frac{1}{2}%
{\displaystyle \int \limits_{B_{x}(r)}}
\psi^{2}|\nabla_{b}u|^{2}d\mu.
\end{array}
\]
That is,
\[%
{\displaystyle \int \limits_{B_{x}(r)}}
(2\psi^{2}u\frac{\partial u}{\partial t}+\psi^{2}|\nabla_{b}u|^{2})d\mu
\leq4\int_{B_{x}(r)}|\nabla_{b}\psi|^{2}u^{2}d\mu.
\]
So there exists a positive constant $C>0$ such that,
\[%
{\displaystyle \int \limits_{B_{x}(r)}}
(2\psi^{2}u\frac{\partial u}{\partial t}+\psi^{2}|\nabla_{b}u|^{2})d\mu \leq
C||\nabla_{b}\psi||_{L^{\infty}(B_{x}(r))}^{2}%
{\displaystyle \int \limits_{supp(\psi)}}
u^{2}d\mu.
\]
Hereafter we adopt the notations $C$ and $A$ as constants which may be
different in each line.

Let $\chi(t)\in C^{\infty}(R)$ be a smooth function on $R$. We have%
\begin{equation}%
\begin{array}
[c]{cl}
& \frac{\partial}{\partial t}%
{\displaystyle \int \limits_{B_{x}(r)}}
(\chi \psi u)^{2}d\mu+\chi^{2}%
{\displaystyle \int \limits_{B_{x}(r)}}
|\nabla_{b}(\psi u)|^{2}d\mu \\
\leq & C\chi(\chi||\nabla_{b}\psi||_{L^{\infty}(B_{x}(r))}^{2}+\left \Vert
\psi \right \Vert _{L^{\infty}\left(  B_{x}(r)\right)  }^{2}||\chi^{\prime
}||_{L^{\infty}(R)}^{2})\int_{supp\psi}u^{2}d\mu.
\end{array}
\label{306}%
\end{equation}
Now we choose the appropriate the smooth functions $\psi$ and $\chi$ as
follows : Let $0<\sigma^{\prime}<\sigma<1$ and $\omega=\left(  \sigma
-\sigma^{\prime}\right)  $, we define
\[
\psi(y)=\left \{
\begin{array}
[c]{l}%
1,\quad on\text{ }B_{x}(\sigma^{\prime}r),\\
0,\quad on\text{ }M\backslash B_{x}(\sigma r),
\end{array}
\right.
\]
with $0\leq \psi \leq1$ and $|\nabla_{b}\psi|\leq \frac{2}{\omega r}$. Define
\[
\chi(t)=\left \{
\begin{array}
[c]{l}%
0,\quad on\text{ }(-\infty,s-\sigma r^{2}),\\
1,\quad on\text{ }(s-\sigma^{\prime}r^{2},\infty),
\end{array}
\right.
\]
with $0\leq \chi \leq1$ and $|\chi^{\prime}|\leq \frac{2}{\omega r^{2}}$. \ Let
$I_{\sigma}=(s-\sigma r^{2},s)$. Integrating (\ref{306}) over $(s-r^{2},t)$
for $t\in I_{\sigma^{\prime}}$, we have
\begin{equation}
\sup_{I_{\sigma^{\prime}}}\{ \int_{B_{x}(r)}(\psi u)^{2}d\mu \}+\int
_{B_{x}(r)\times I_{\sigma^{\prime}}}|\nabla_{b}(\psi u)|^{2}d\overline{\mu
}\leq \frac{C}{\left(  \omega r\right)  ^{2}}\int_{Q_{\sigma}}u^{2}%
d\overline{\mu}. \label{307}%
\end{equation}

Let $k(r,\nu)=\frac{Cr^{2}}{V(B_{x}(r))^{\frac{2}{\nu}}}$. By H\"{o}lder's
inequality and CR Sobolev inequality (\ref{p31}), we have%
\[%
\begin{array}
[c]{ccl}%
\int_{B_{x}(\sigma^{\prime}r)}(\psi u)^{2(1+\frac{2}{\nu})}d\mu & \leq &
\int_{B_{x}(r)}(\psi u)^{2(1+\frac{2}{\nu})}d\mu \\
& = & \int_{B_{x}(r)}(\psi u)^{2}(\psi u)^{\frac{4}{\nu}}d\mu \\
& \leq & (\int_{B_{x}(r)}(\psi u)^{\frac{2\nu}{\nu-2}}d\mu)^{\frac{\nu-2}{\nu
}}(\int_{B_{x}(r)}(\psi u)^{2}d\mu)^{\frac{2}{\nu}}\\
& \leq & [k\int_{B_{x}(r)}(|\nabla_{b}(\psi u))|^{2}+|\frac{\psi u}{r}%
|^{2})d\mu]||\psi u||_{L^{2}(B_{x}(r))}^{\frac{4}{\nu}}.
\end{array}
\]
Then, integrating over $I_{\sigma^{\prime}}$,
\[
\int_{Q_{\sigma^{^{\prime}}}}u^{2(1+\frac{2}{\nu})}d\overline{\mu}\leq
k(\frac{A}{(r\omega)^{2}}\int_{Q_{\sigma}}u^{2}d\overline{\mu})^{1+\frac
{2}{\nu}}\quad
\]
by utilizing $\left(  \ref{307}\right)  $ twice. Let $\theta=\left(
1+\frac{2}{\nu}\right)  $ and we note that $u^{p}$ is also a positive smooth
subsolution to the CR heat equation if $u$ is a positive smooth subsolution to
the CR heat equation for $p\geq1$. So we have the following inequality
\begin{equation}
\int_{Q_{\sigma^{^{\prime}}}}u^{2p\theta}d\overline{\mu}\leq k(\frac
{A}{(r\omega)^{2}}\int_{Q_{\sigma}}u^{2p}d\overline{\mu})^{\theta}.
\label{308}%
\end{equation}

Let $\omega_{j}=\frac{(1-\delta)}{2^{j}}$ and $\sigma_{0}=1$ and $\sigma
_{k+1}=1-%
{\displaystyle \sum \limits_{j\in I_{k}}}
\omega_{j}$ for $k\geq0$. Apply $\left(  \ref{308}\right)  $ with
$p=\theta^{i}$, $\sigma=\sigma_{i}$ and $\sigma^{\prime}=\sigma_{i+1}$, we
have
\begin{equation}
\int_{Q_{\sigma_{i+1}}}u^{2\theta^{i+1}}d\overline{\mu}\leq k(\frac
{A}{(r\omega_{i})^{2}}\int_{Q_{\sigma_{i}}}u^{2\theta^{i}}d\overline{\mu
})^{\theta}. \label{309}%
\end{equation}
By iteration, we obtain
\[
(\int_{Q_{\sigma_{i+1}}}u^{2\theta^{i+1}})^{\theta^{-1-i}}\leq A(\nu)k^{(%
{\displaystyle \sum \limits_{l\in I_{i}^{^{\prime}}}}
\theta^{l-1-i})}\left(  (1-\delta)r\right)  ^{(-2%
{\displaystyle \sum \limits_{l\in I_{i}^{^{\prime}}}}
\theta^{l-i})}(\int_{Q_{0}}u^{2}d\overline{\mu}).
\]

Letting $i$ tend to $\infty$, we have
\begin{equation}
\sup_{Q_{\delta}}\{u^{2}\} \leq A(\nu)k^{\frac{\nu}{2}}((1-\delta)r)^{-2-\nu
}\int_{Q}u^{2}d\overline{\mu}. \label{310}%
\end{equation}

This completes the case $p=2$. When $p>2$, then the transformation $u\mapsto
u^{\frac{p}{2}}$ still satisfies the hypothesis for $p=2$. Hence, it is easy
to validate this case. As for $p\in(0,2)$, the case $p=2$ yields that%
\[
\sup_{\widetilde{Q}_{\sigma}}\left \{  u\right \}  \leq \frac{\widetilde
{k}\left(  \tau,\nu,r\right)  }{(\rho-\sigma)^{1+\frac{\nu}{2}}}(%
{\displaystyle \int \limits_{\widetilde{Q}_{\rho}}}
u^{2}d\overline{\nu})^{\frac{1}{2}},
\]
where $\widetilde{k}\left(  \tau,\nu,r\right)  =\frac{A\left(  \tau
,\nu,2\right)  C^{\frac{\nu}{2}}}{r^{2}},$ $d\overline{\nu}=d\nu dt=$ the
volume-normalized measure $\frac{d\overline{\mu}}{V\left(  \widetilde
{Q}\right)  },$ and $\widetilde{Q}=Q_{\delta^{\prime}},$ $\widetilde
{Q}_{\sigma}=\left(  \widetilde{Q}\right)  _{\sigma},$ $\widetilde{Q}_{\rho
}=\left(  \widetilde{Q}\right)  _{\rho},$ $\sigma \in \left(  \frac{1}%
{2},1\right)  ,$ $\left(  1-\rho \right)  =\frac{3}{4}\left(  1-\sigma \right)
.$ Due to the inequality%
\[
\left \Vert u\right \Vert _{L^{2}\left(  \widetilde{Q}_{\rho},\overline{\nu
}\right)  }\leq \left \Vert u\right \Vert _{L^{\infty}\left(  \widetilde{Q}%
_{\rho},\overline{\nu}\right)  }^{1-\frac{p}{2}}\left \Vert u\right \Vert
_{L^{p}\left(  \widetilde{Q},\overline{\nu}\right)  }^{\frac{p}{2}},
\]
we have%
\begin{equation}
\left \Vert u\right \Vert _{L^{\infty}\left(  \widetilde{Q}_{\sigma}%
,\overline{\nu}\right)  }\leq(\widetilde{k}\left(  \tau,\nu,r\right)
4^{\left(  1+\frac{\nu}{2}\right)  }\left \Vert u\right \Vert _{L^{p}\left(
\widetilde{Q},\overline{\nu}\right)  }^{\frac{p}{2}})\frac{1}{\left(
1-\sigma \right)  ^{1+\frac{\nu}{2}}}\left \Vert u\right \Vert _{L^{\infty
}\left(  \widetilde{Q}_{\rho},\overline{\nu}\right)  }^{1-\frac{p}{2}}.
\label{311}%
\end{equation}

Set $\sigma_{0}=\delta_{1}=\frac{\delta}{\delta^{\prime}}\in \left(
0,1\right)  ,$ $\left(  1-\sigma_{i}\right)  =\left(  \frac{3}{4}\right)
^{i}\left(  1-\delta_{1}\right)  .$ Taking $\sigma=\sigma_{i},$ $\rho
=\sigma_{i+1}$ in $\left(  \ref{311}\right)  $, we have%
\begin{equation}
\left \Vert u\right \Vert _{L^{\infty}\left(  \widetilde{Q}_{\sigma_{i}%
},\overline{\nu}\right)  }\leq(\frac{4}{3})^{i\left(  1+\frac{\nu}{2}\right)
}w\left \Vert u\right \Vert _{L^{\infty}\left(  \widetilde{Q}_{\sigma_{i+1}%
},\overline{\nu}\right)  }^{1-\frac{p}{2}} \label{312}%
\end{equation}
for $w=[\widetilde{k}\left(  \tau,\nu,r\right)  4^{\left(  1+\frac{\nu}%
{2}\right)  }\left \Vert u\right \Vert _{L^{p}\left(  \widetilde{Q}%
,\overline{\nu}\right)  }^{\frac{p}{2}}\left(  1-\delta_{1}\right)
^{-1-\frac{\nu}{2}}]$.

By iterating the inequality $\left(  \ref{312}\right)  $, we obtain%
\[
\left \Vert u\right \Vert _{L^{\infty}\left(  \widetilde{Q}_{\delta_{1}%
},\overline{\nu}\right)  }\leq(\frac{4}{3})^{\left(  1+\frac{\nu}{2}\right)
{\displaystyle \sum \limits_{l\in I_{i-1}}}
l\left(  1-\frac{p}{2}\right)  ^{l}}w^{%
{\displaystyle \sum \limits_{l\in I_{i-1}^{^{\prime}}}}
\left(  1-\frac{p}{2}\right)  ^{l}}\left \Vert u\right \Vert _{L^{\infty}\left(
\widetilde{Q}_{\sigma_{i}},\overline{\nu}\right)  }^{1-\frac{p}{2}},
\]
for $i\in%
\mathbb{N}
$. As before, letting $i$ tend to $\infty$, we have%
\[
\left \Vert u\right \Vert _{L^{\infty}\left(  \widetilde{Q}_{\delta_{1}%
},\overline{\nu}\right)  }\leq(\frac{4}{3})^{\frac{\left(  2+\nu \right)
\left(  2-p\right)  }{p^{2}}}\frac{A\left(  \tau,\nu,2\right)  ^{\frac{2}{p}%
}C^{\frac{\nu}{p}}}{r^{\frac{4}{p}}}4^{\frac{\left(  2+\nu \right)  }{p}%
}\left(  1-\delta_{1}\right)  ^{-\frac{2+\nu}{p}}\left \Vert u\right \Vert
_{L^{p}\left(  \widetilde{Q},\overline{\nu}\right)  }.
\]

This completes the proof.
\end{proof}

\end{document}